\tikzstyle{legend_general}=[rectangle, rounded corners, thin,
\DeclareMathOperator{\ch}{ch}
\DeclareMathOperator{\AT}{AT}
\DeclareMathOperator{\supp}{Supp}
\DeclareMathOperator{\coeff}{coeff}
\DeclareMathOperator{\perm}{per}
\newtheorem{theorem}{Theorem}[section]
\newtheorem{conjecture}[theorem]{Conjecture}
\newtheorem{question}[theorem]{Question}
\newtheorem{lemma}[theorem]{Lemma}
\theoremstyle{definition}
\title{List-avoiding orientations}
\author{Peter Bradshaw$^*$}
\address{Department of Mathematics, University of Illinois Urbana-Champaign}
\email{pb38@illinois.edu}
\thanks{$^*$This project was partially funded by NSF RTG grant DMS-1937241}
\author{Yaobin Chen}
\address{Shanghai Center for Mathematical Sciences, Fudan University, Shanghai, 200438, China}
\email{ybchen21@m.fudan.edu.cn}
\author{Hao Ma}
\address{Faculty of General Education and Foreign Languages, Anhui Institute of Information Technology, Wuhu, 241199, China}
\email{haoma5@iflytek.com}
\author{Bojan Mohar$^{\dagger}$}
\address{Department of Mathematics, Simon Fraser University, Burnaby, BC, Canada}
\email{mohar@sfu.ca}
\thanks{$^{\dagger}$Supported in part by the NSERC Discovery Grant R611450 (Canada),
and by the Research Project N1-0218 of ARRS (Slovenia). On leave from IMFM, Department of Mathematics, University of Ljubljana.}
\author{Hehui Wu$^{\dagger \dagger}$}
\address{Shanghai Center for Mathematical Sciences, Fudan University, Shanghai, 200438, China}
\email{hhwu@fudan.edu.cn}
\thanks{$^{\dagger \dagger}$Supported in part by National Natural Science Foundation of China grant 11931006, National Key Research and Development Program of China (Grant No. 2020YFA0713200), and the Shanghai Dawn Scholar Program grant 19SG01.}
\begin{document}
\maketitle
\begin{abstract}
Given a graph $G$ with a set $F(v)$ of forbidden values at each $v \in V(G)$,
an $F$-avoiding orientation of $G$ is an orientation in which $\deg^+(v) \not \in F(v)$ for each vertex $v$.
Akbari, Dalirrooyfard, Ehsani, Ozeki, and Sherkati 
conjectured that if $|F(v)| < \frac{1}{2} \deg(v)$ for each $v \in V(G)$, then $G$
has an $F$-avoiding orientation, and they showed that this statement is true when $\frac{1}{2}$ is replaced by $\frac{1}{4}$.
In this paper, we take a step toward this conjecture by proving that
if $|F(v)| < \lfloor \frac{1}{3} \deg(v) \rfloor$ for each vertex $v$, then $G$ has an $F$-avoiding orientation. Furthermore, we show that if the maximum degree of $G$ is subexponential in terms of the minimum degree, then this coefficient of $\frac{1}{3}$ can be increased to $\sqrt{2} - 1 - o(1) \approx 0.414$.
Our main tool is a new sufficient condition for the existence of an $F$-avoiding orientation based on the Combinatorial Nullstellensatz of Alon and Tarsi.
\end{abstract}

\section{Introduction}

An \emph{orientation} of a graph $G$ is an assignment of a direction $uv$ or $vu$ to each edge $\{u,v\} \in E(G)$. For an orientation $D$ of a graph $G$ and a vertex $v \in V(G)$, we denote by 
$E^+(v)$ the arcs outgoing from $v$ in $D$, and we denote by $E^-(v)$ the arcs incoming to $v$. We write $\deg_D^+(v) = |E^+(v)|$ and $\deg_D^-(v) = |E^-(v)|$.
We write $\mathbb N$ for the set $\{0,1, 2,\dots\}$ of nonnegative integers.
In 1976, Frank and Gy{\'a}rf{\'a}s~\cite{gyarfas1978orient} proved that for a graph $G$ and two mappings $a,b:V(G)\xrightarrow{}\mathbb{N}$ satisfying $a(v)\leq b(v)$ for every vertex $v$, $G$ has an orientation $D$ satisfying $a(v)\leq \deg_D^+(v)\leq b(v)$ for every vertex $v$ if and only if for each subset $U\subseteq V(G)$,
\[\sum\limits_{v\in U}a(v)-d(U) \leq |E(G[U])|\leq \sum\limits_{v\in U}b(v),\]
where $d(U)$ is the number of edges joining $U$ and $\overline{U} = V(G)\setminus U$, and $G[U]$ is the subgraph of $G$ induced by $U$.

Recently, Akbari, Dalirrooyfard, Ehsani, Ozeki and Sherkati~\cite{akbari2020orientations}
considered the similar problem of finding an orientation of a graph that avoids a certain out-degree at each vertex. 
Given a graph $G$ and a function $f:V(G)\xrightarrow{}\mathbb{N}$, we
say that an orientation $D$ of $G$ is \emph{$f$-avoiding} if  $\deg_D^+(v)\neq f(v)$ for each $v\in V(G)$.
It was proved in \cite{akbari2020orientations} that there is an $f$-avoiding orientation for every 2-connected graph $G$ that is not an odd cycle and for
every function $f:V(G)\xrightarrow{}\mathbb N$, and that an odd cycle has an $f$-avoiding orientation if and only if $f(v)\neq 1$ for some vertex $v$ of the cycle.
Frank, Tardos, and Seb{\H{o}}~\cite{frank1984covering} 
considered the same problem modulo $2$, or in other words, with parity constraints on the out-degrees of the vertices in $G$.

In addition to considering orientations that avoid a single given out-degree at each vertex, Akbari et al.~considered the graph orientation problem in which each vertex has a list of forbidden out-degrees.
Given a graph $G$ and a function $F:V(G)\xrightarrow{}2^{\mathbb{N}}$, an orientation $D$ of $G$ is said to be \emph{$F$-avoiding} if $\deg_D^+(v)\notin F(v)$ for each $v\in V(G)$.

The problem of finding an $F$-avoiding orientation of a graph $G$ can be represented in the language of \emph{general factors},
introduced by Lov\'asz \cite{LovaszFactor} and defined as follows.
Given a graph $G$ and a function $H:V(G) \rightarrow 2^{\mathbb N}$, a spanning subgraph $G' \subseteq G$ is an \emph{$H$-factor} if $\deg_{G'}(v) \in H(v)$ for each $v \in V(G)$. 
To represent the problem of finding an $F$-avoiding orientation of $G$ in the setting of $H$-factors,
we construct a graph $G^{(1)}$ 
by replacing each edge $e \in E(G)$ with a vertex $v_e$ of degree $2$ whose neighbors are the endpoints of $e$, and we define a function $H:V(G^{(1)}) \rightarrow 2^{\mathbb N}$ so that $H(v) = \mathbb N \setminus F(v)$ for each $v \in V(G)$ and $H(v_e) = \{1\}$ for each $e \in E(G)$. 
Then, it is easy to check that the problem of finding an $F$-avoiding orientation of $G$ is equivalent to the problem of finding an $H$-factor of $G^{(1)}$.
Cornu\'ejols \cite{Corn} and Seb\H{o} \cite{Sebo} give polynomial-time algorithms for checking whether
$G$ has an $H$-factor whenever $i \not \in H(v)$ implies that $i+1 \in H(v)$ for each $i \in \mathbb N$ and $v \in V(G)$,
which allow us to check in polynomial time whether $G$ has an $F$-avoiding orientation whenever the function $H$ defined from $F$ as described above satisfies this condition.

The concept of an $F$-avoiding orientation is also related to nowhere-zero flows and in particular to Tutte's $3$-flow conjecture,
which is stated as follows. Given a directed graph $D$ and a positive integer $k$, a \emph{nowhere-zero $k$-flow} on $D$ is an assignment $\phi:E(D) \rightarrow \mathbb Z_k \setminus \{0\}$ satisfying $\sum_{e \in E^+(v)} \phi(v) \equiv \sum_{e \in E^-(v)} \phi(v) \pmod k$ at each vertex $v \in V(G)$.  
When $k = 3$, a nowhere-zero $3$-flow can be interpreted as an orientation of a graph $G$, and an orientation $D$ of $G$ is a nowhere-zero $3$-flow if and only if $\deg^+_D(v) \equiv \deg^-_D(v) \pmod 3$ holds for each vertex $v \in V(G)$.
Tutte's $3$-flow conjecture states 
that every graph $G$ with no edge-cut of size $1$ or $3$ admits a nowhere-zero 3-flow (see \cite[Conjecture $(C_2)$]{Jaeger} or \cite{BondyMurty}). 
It has long been known that Tutte's $3$-flow conjecture is
equivalent to the statement that every 5-regular graph with no edge-cut of size $1$ or $3$
has an $F$-avoiding orientation where $F (v) =\{0,2,3,5\}$ at each vertex $v$  (see  \cite[Unsolved Problem 48]{BondyMurty} for this equivalent formulation).

It is conjectured that for every graph $G$, as long as $F$ forbids less than roughly half of the possible out-degrees at each vertex, then $G$ has an $F$-avoiding orientation.

\begin{conjecture}[\cite{akbari2020orientations}]\label{conj:1}
Let $G$ be a graph, and let $F:V(G)\xrightarrow{}2^{\mathbb{N}}$. If
\[|F(v)|\leq \frac{1}{2} (\deg_G(v)-1) \]
for each $v\in V(G)$, then $G$ has an $F$-avoiding orientation. 
\end{conjecture}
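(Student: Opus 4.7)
The plan is to apply the Combinatorial Nullstellensatz of Alon and Tarsi to a polynomial whose nonzero evaluations correspond bijectively to $F$-avoiding orientations. Fix an arbitrary reference orientation of $G$, and for each edge $e$ introduce a $\{0,1\}$-valued variable $x_e$ indicating whether $e$ is reversed from its reference. Then each out-degree $\deg^+(v)$ becomes an affine linear form $L_v(\mathbf{x})$ with coefficients in $\{+1,-1\}$, and the polynomial
\[
P(\mathbf{x}) \;=\; \prod_{v \in V(G)} \prod_{k \in F(v)} \bigl( L_v(\mathbf{x}) - k \bigr)
\]
is nonzero at $\mathbf{x} \in \{0,1\}^{E(G)}$ precisely when the corresponding orientation is $F$-avoiding. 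Under the hypothesis of the conjecture,
\[
\deg P \;=\; \sum_{v} |F(v)| \;\le\; \tfrac{1}{2}\!\sum_{v}(\deg(v)-1) \;=\; |E(G)| - \tfrac{1}{2}|V(G)|,
\]
so the Combinatorial Nullstellensatz applies: it suffices to exhibit a multilinear monomial $\prod_{e \in S}x_e$ of degree $\deg P$ whose coefficient in $P$ is nonzero.

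Expanding $P$ shows that this coefficient is a signed sum in which, for each factor $(L_v - k)$, one selects a single incident variable $x_e$ weighted by its $\pm 1$ coefficient, subject to the chosen edges forming $S$ (in particular, no edge is used twice). The second step of the plan is to reduce to the ``critical'' case where $|F(v)| = \lfloor (\deg(v)-1)/2 \rfloor$ at every vertex and $G$ is $2$-edge-connected. If some vertex has strictly smaller demand or lies on a bridge, one should orient an incident edge greedily---decrementing degree and demand together---and recurse on a smaller instance, appealing to the Frank--Gy\'arf\'as orientation theorem when needed to handle the boundary. This concentrates the difficulty on instances where the slack $|E(G)| - \deg P$ is as small as the Nullstellensatz permits.

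The principal obstacle is to prove that the critical coefficient never vanishes. I would try to choose the target set $S$ as the edge set of a spanning subgraph whose degree sequence matches the demands $(|F(v)|)_v$, and then reinterpret the signed coefficient as counting ``$F$-compatible'' spanning structures in the spirit of the Alon--Tarsi theorem for list colourings: orientations that differ by reversal along a directed cycle should cancel in pairs, leaving a residue guaranteed by an Eulerian or parity invariant. The main danger is that the $\pm 1$ signs are driven both by the reference orientation and by the arbitrary arithmetic of the values $k \in F(v)$, which provides no symmetry to exploit, so malicious $F$ could in principle engineer exact cancellation. This is presumably the barrier that forces the present paper to settle for the weaker constants $\tfrac{1}{3}$ and $\sqrt{2}-1$, and the equivalence with Tutte's $3$-flow conjecture (the case $F(v) = \{0,2,3,5\}$ on $5$-regular graphs) makes it plausible that reaching the sharp constant $\tfrac{1}{2}$ requires a genuinely new combinatorial idea rather than a refinement of the Alon--Tarsi machinery alone.
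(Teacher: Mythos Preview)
This statement is a \emph{conjecture}; the paper does not prove it, so there is no proof to compare against. Your polynomial setup is correct and essentially identical to the one the paper uses in Section~2, and you are right that the crux is showing that some multilinear monomial of top degree has nonzero coefficient. However, two points in your discussion of the obstruction are mistaken.

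First, the values $k\in F(v)$ do \emph{not} influence the top-degree coefficients. To obtain a monomial of degree $\sum_v|F(v)|$ from $P$, one must select the linear part $L_v$ (never the constant $-k$) from each factor $(L_v-k)$; hence the maximum-degree homogeneous part of $P$ coincides with that of $\prod_v L_v^{|F(v)|}$, which depends only on the \emph{sizes} $|F(v)|$ and on the incidence signs. The paper makes exactly this reduction (from $f_0$ to $f$). So ``malicious $F$'' cannot engineer cancellation through the arithmetic of its elements---the difficulty is purely structural.

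Second, the $3$-flow instance $F(v)=\{0,2,3,5\}$ on $5$-regular graphs has $|F(v)|=4>2=\tfrac12(\deg(v)-1)$, so it is \emph{not} a special case of Conjecture~\ref{conj:1} and does not witness its hardness. The genuine barrier, as the paper frames it, is that (via the duality in Theorem~\ref{thm:duals} and equation~(\ref{eqn:AT})) the nonvanishing you seek is equivalent to the existence of an Alon--Tarsi orientation of a spanning subgraph with $\deg^+_D(v)\ge\tfrac12(\deg_G(v)-1)$ everywhere; this is posed as the open Question~\ref{q:stronger}. The paper's own sufficient condition (Theorem~\ref{thm:h}) is provably capped at $\sqrt2-1$ on complete graphs, but the full Nullstellensatz route is not known to fail at $\tfrac12$---it is simply open.
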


If Conjecture \ref{conj:1} is true, then the upper bound $\frac{1}{2}(\deg_G(v)-1)$ is sharp. 
To show sharpness, take a $2k$-regular graph $G$ on $n$ vertices 
with independence number less than $\frac{n}{k+1}$. Then, $G$
has no $F$-avoiding orientation with $F(v) =  \{k,k+1,\dots,2k-1\}$
for each vertex $v$.
The graph $K_{2k + 1}$ is such an example.

One can also compare the size of each forbidden list $F(v)$ to the out-degree of $v$ in some fixed orientation. Since every graph $G$ has an orientation $D$ in which each $v \in V(G)$ satisfies $\deg_D^+(v) \geq  \lfloor \deg_G(v) / 2 \rfloor$, the following conjecture implies Conjecture \ref{conj:1} with an error of at most $1$.

\begin{conjecture}[\cite{akbari2020orientations}]\label{conj:2}
Let $G$ be a graph, and let $F:V(G)\xrightarrow{}2^{\mathbb{N}}$. If $G$ has an orientation $D$ such that 
$$\deg^+_D(v)\ge |F(v)|+1$$
for every $v\in V(G)$, then $G$ has an $F$-avoiding orientation.
\end{conjecture}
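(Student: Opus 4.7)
The natural attack on Conjecture~\ref{conj:2} is via the Combinatorial Nullstellensatz, which the abstract flags as the main tool of the paper. My plan is to fix an orientation $D$ of $G$ satisfying the hypothesis $\deg^+_D(v) \geq |F(v)|+1$, and introduce a $\{0,1\}$-variable $x_e$ for each $e \in E(G)$, where $x_e = 1$ means $e$ keeps its $D$-orientation. Then the out-degree
$$d^+(v) = \sum_{e \in E^+_D(v)} x_e + \sum_{e \in E^-_D(v)} (1 - x_e)$$
is affine in the $x_e$'s, and the \emph{avoidance polynomial}
$$P(x) = \prod_{v \in V(G)} \prod_{k \in F(v)} \bigl(d^+(v) - k\bigr)$$
is nonzero at $x \in \{0,1\}^{E(G)}$ if and only if the corresponding orientation is $F$-avoiding. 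It thus suffices to find a multilinear monomial $M$ of degree $\sum_v |F(v)|$ in $P$ whose coefficient is nonzero.

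My candidate for $M$ uses the slack in the hypothesis: at each vertex $v$, pick any subset $T(v) \subseteq E^+_D(v)$ with $|T(v)| = |F(v)|$ (possible because $\deg^+_D(v) > |F(v)|$), set $H = (V(G), \bigsqcup_v T(v))$, and take $M = \prod_v \prod_{e \in T(v)} x_e$. Expanding $[M]P$ by the pick-a-term rule, and using that $x_e$ has coefficient $+1$ in $d^+(v) - k$ when $e \in E^+_D(v)$ and $-1$ when $e \in E^-_D(v)$, a short calculation collapses the contributions into
$$[M]P \;=\; \prod_{v} |F(v)|! \,\cdot\, \bigl(\mathrm{EE}(D|_H) - \mathrm{EO}(D|_H)\bigr),$$
where $\mathrm{EE}(D|_H)$ and $\mathrm{EO}(D|_H)$ denote the numbers of Eulerian subdigraphs of $D|_H$ with an even, respectively odd, number of edges. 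This is the classical Alon--Tarsi quantity of $D|_H$, which equals $\pm \prod_v |F(v)|! \neq 0$ whenever $D|_H$ is acyclic, since only the empty subdigraph is then Eulerian.

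The hard part is therefore the following purely combinatorial step: can one always choose $T(v) \subseteq E^+_D(v)$ with $|T(v)| = |F(v)|$ so that the resulting $D|_H$ is acyclic? Equivalently, is there a linear ordering $\pi$ of $V(G)$ in which every vertex $v$ has at least $|F(v)|$ of its $D$-out-neighbors strictly above it? Since only $\deg^+_D(v) - |F(v)|$ outgoing arcs at each $v$ may be discarded, the discarded set must be a feedback arc set that is very thin at every vertex, and no such set need exist in general. When the direct selection fails, I would try first to replace $D$ by another orientation of $G$ with the same out-degree sequence, obtained by reversing directed cycles in $D$ (which preserves the hypothesis), so as to unlock an acyclic $T(v)$-selection. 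If even this is insufficient, the Alon--Tarsi number of a cyclic $D|_H$ must be analyzed directly, and it is in reconciling the local slack $\deg^+_D(v) - |F(v)| \geq 1$ with the global cyclic structure of $D$ that the main difficulty of the conjecture seems to reside.
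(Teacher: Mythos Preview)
The statement you are attempting to prove is Conjecture~\ref{conj:2}, which is \emph{open}; the paper does not contain a proof of it. What the paper proves toward this conjecture is the $\tfrac{2}{3}$-approximation in Theorem~\ref{thm:two_thirds}, obtained not via the Alon--Tarsi route you outline but via the separate sufficient condition of Theorem~\ref{thm:h} combined with a fractional rounding argument (Lemma~\ref{lem:Integer_Rounding}).

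Your polynomial setup and the coefficient identity
\[
[M]P \;=\; \pm\prod_{v} |F(v)|!\,\bigl(EE(D|_H)-EO(D|_H)\bigr)
\]
are correct; in fact this is exactly Theorem~\ref{thm:2} of the paper (originally due to Akbari et~al.), and the paper's Section~\ref{sec:poly} rederives the same formula through the dual-polynomial/permanent framework (Theorem~\ref{thm:duals} together with (\ref{eqn:AT})). So you have rediscovered a known reduction, not a new proof.

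The gap you identify is genuine and is precisely why the conjecture remains open. Your combinatorial question --- whether one can always select $T(v)\subseteq E^+_D(v)$ with $|T(v)|=|F(v)|$ so that $D|_H$ is acyclic (or merely Alon--Tarsi) --- is essentially the paper's Question~\ref{q:stronger} in Section~\ref{sec:conc}, which the authors pose as an open problem. Reversing directed cycles to adjust $D$ while preserving out-degrees, as you suggest, does not in general manufacture the needed acyclic or Alon--Tarsi sub-orientation; this is the obstruction the paper works around, rather than through. Indeed, the authors show at the end of Section~\ref{sec:reg} that even their stronger tool, Theorem~\ref{thm:h}, cannot push the coefficient beyond $\sqrt{2}-1$ on $K_n$, so the full conjecture will require ideas beyond both the Alon--Tarsi reduction and Theorem~\ref{thm:h}.
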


One tool that was used extensively in \cite{akbari2020orientations} is the following theorem, called the Combinatorial Nullstellensatz, introduced by Alon and Tarsi in \cite{AT} and further developed as a tool by Alon \cite{alon_1999}. 

\begin{theorem}[Combinatorial Nullstellensatz, \cite{AT}]
\label{thm:CNSS}
Let $K$ be a field, and let $f$ be a polynomial in the ring $K[x_1, \dots, x_n]$. Suppose that the degree of $f$ is $t_1 + \dots + t_n$, 
where each $t_i$ is a nonnegative integer, and suppose that the coefficient of\/ $\prod_{i = 1}^n x_i^{t_i}$ in $f$ is nonzero. Then, if $S_1, \dots, S_n$ are subsets of $K$ 
each
satisfying $|S_i| > t_i$, then there exist elements $s_1 \in S_1, \dots, s_n \in S_n$ so that 
\[f(s_1, \dots, s_n) \neq 0.\]
\end{theorem}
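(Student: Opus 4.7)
The plan is to prove Theorem \ref{thm:CNSS} by contradiction: I would assume that $f(s_1, \dots, s_n) = 0$ for every choice of $s_i \in S_i$ and derive a contradiction from the hypothesis that the coefficient of $\prod_{i=1}^n x_i^{t_i}$ in $f$ is nonzero. The strategy is to reduce $f$ to a polynomial $\tilde f$ of bounded degree in each variable, for which the vanishing hypothesis will force it to be identically zero.

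The first step is an auxiliary lemma: if $P \in K[x_1,\ldots,x_n]$ satisfies $\deg_{x_i}(P) < |S_i|$ for each $i$ and vanishes on all of $S_1 \times \cdots \times S_n$, then $P \equiv 0$. I would prove this by induction on $n$. The case $n=1$ is the classical fact that a univariate polynomial of degree $d$ has at most $d$ roots. For the inductive step, write $P = \sum_{j < |S_n|} P_j(x_1,\ldots,x_{n-1})\,x_n^j$, fix $(s_1,\ldots,s_{n-1}) \in S_1 \times \cdots \times S_{n-1}$, and observe that the univariate polynomial $P(s_1,\ldots,s_{n-1},x_n)$ has too many zeros in $S_n$ relative to its degree, so each $P_j(s_1,\ldots,s_{n-1}) = 0$; the inductive hypothesis then yields $P_j \equiv 0$.

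Next, I would reduce $f$ to a polynomial $\tilde f$ with $\deg_{x_i}(\tilde f) < |S_i|$ for each $i$ that agrees with $f$ on the entire product set. Define $g_i(x_i) = \prod_{s \in S_i}(x_i - s)$, a monic polynomial of degree $|S_i|$. Repeatedly rewrite any occurrence of $x_i^{k}$ with $k \geq |S_i|$ using the identity $x_i^k = x_i^{k-|S_i|}g_i(x_i) - x_i^{k-|S_i|}\bigl(g_i(x_i) - x_i^{|S_i|}\bigr)$, which replaces $x_i^k$ by a polynomial of strictly lower $x_i$-degree. The process terminates after finitely many steps, and since each $g_i$ vanishes on $S_i$, the resulting $\tilde f$ agrees with $f$ on $S_1 \times \cdots \times S_n$.

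The most delicate point, and the only place where I anticipate any real obstacle, is verifying that the coefficient of $\prod x_i^{t_i}$ is preserved under this reduction. Each substitution strictly decreases the exponent of exactly one variable while leaving the others unchanged, so any monomial $c\prod x_i^{a_i}$ of $f$ that contributes to $\prod x_i^{t_i}$ in $\tilde f$ must satisfy $a_i \geq t_i$ for every $i$; combined with $\sum a_i \leq \deg f = \sum t_i$, this forces $a_i = t_i$ for all $i$. Since each $t_i < |S_i|$, that monomial is already in reduced form and passes unchanged into $\tilde f$, so the coefficient of $\prod x_i^{t_i}$ in $\tilde f$ equals the one in $f$ and is nonzero. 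Thus $\tilde f$ is a nonzero polynomial satisfying the hypotheses of the auxiliary lemma, so it cannot vanish identically on $S_1 \times \cdots \times S_n$, contradicting $\tilde f = f \equiv 0$ there.
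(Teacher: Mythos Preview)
Your argument is correct and is essentially Alon's original proof of the Combinatorial Nullstellensatz. Note, however, that the paper does not supply its own proof of Theorem~\ref{thm:CNSS}: it is quoted from \cite{AT} (and \cite{alon_1999}) as a known tool, so there is no in-paper proof to compare against. Your write-up would serve perfectly well as a self-contained justification if one were desired.
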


We note that Shirazi and Verstra{\"e}te~\cite{shirazi2008note} used the Combinatorial Nullstellensatz to prove 
a result similar to Conjecture \ref{conj:1} for $F$-avoiding subgraphs. They 
proved that for every graph $G$, if $F:V(G) \rightarrow 2^{\mathbb N}$ satisfies $|F(v)| < \lfloor \deg_G(v)/2 \rfloor$ for each $v \in V(G)$, then $G$ has a spanning subgraph $H$ such that $\deg_H(v) \not \in F(v)$ for each $v \in V(G)$. Frank, Lau, and Szab\'o \cite{FLS} proved the same result using elementary methods.

In graph theory, the Combinatorial Nullstellensatz is often used for list coloring problems (see e.g.~\cite{AT,Ellingham,Huang} or the monograph \cite{ZhuBook}).
In this case, the polynomial $f$ in the statement of the Combinatorial Nullstellensatz is the \emph{graph polynomial} of a graph $G$,
introduced by Li and Li \cite{LiLi} and defined as
follows. For an arbitrary fixed orientation $D$ of $G$, the \emph{incidence matrix} of $D$ is defined as the matrix $M = (m_{ve}: v \in V(G), e \in E(G))$
for which $m_{ve} = 1$ if $e \in E_G^+(v)$, $m_{ve} = -1$ if $e \in E_G^-(v)$, and $m_{ve} = 0$ otherwise (see e.g.~\cite[Section 2.6]{Gross}). Then, given a field $K$, the \emph{graph polynomial} of $G$ is the polynomial in the ring $K[x_v: v \in V(G)]$
defined as $f^*_D = \prod_{e \in E(G)} \left ( \sum _{v \in V(G)} m_{ve} x_v \right )$. 
Equivalently, $f^*_D = \prod (x_u - x_v)$, where the product is taken over all directed arcs $uv \in E(D)$. With this definition, a proper coloring $V(G) \rightarrow K$ can be translated into an assignment of values of $K$ to the variables $x_v$, and $f^*_D$ takes a nonzero value for a given assignment if and only if the corresponding coloring of $G$ is proper.

One convenient property of the graph polynomial is that its coefficients can be determined solely by counting Eulerian orientations of the graph (see {\cite{alon_1999}} for a complete explanation). These are defined as follows. 
An orientation $D$ of a graph $G$ is said to be \emph{Eulerian} if $\deg_D^+(v)=\deg_D^-(v)$ holds for all $v\in V(G)$. A subgraph $H$ of $G$ is called \emph{even} if $|E(H)|$ is even and is called \emph{odd} otherwise. Given an orientation $D$ of $G$, we let $EE(D)$ and $EO(D)$, respectively, denote the number of even and odd subgraphs of $G$ that are Eulerian with respect to $D$.
If $G$ has an orientation $D$ satisfying $EE(D) \neq EO(D)$, then we say that $D$ is an \emph{Alon-Tarsi orientation}. Alon and Tarsi \cite{AT}
first considered these orientations and
proved the following groundbreaking result, which has deep applications in list-coloring: If $D$ is an Alon-Tarsi orientation of $G$, and if $L$ is a list assignment on $G$
for which $|L(v)| > \deg_D^+(v)$ at each vertex $v \in V(G)$, then $G$ is $L$-choosable.
Motivated by this beautiful result, Jensen and Toft \cite{jensen2011graph}
defined the \emph{Alon-Tarsi number} of a graph $G$, denoted by $\AT(G)$, as the minimum value $k$ such that $G$ has an Alon-Tarsi orientation of maximum out-degree less than $k$. Alon and Tarsi's result can then be rephrased as $\ch(G) \leq \AT(G)$, where $\ch(G)$ is the list chromatic number of the graph.

In their work on $F$-avoiding orientations, Akbari et al.~\cite{akbari2020orientations} also 
apply the Combinatorial Nullstellensatz. However, they do not apply this tool to the graph polynomial defined above. Rather, given a graph $G$,
they define a polynomial $P$ in a ring with a set $\{y_e:e \in E(G)\}$ of algebraically independent variables which can take the values $1$ or $-1$.
It turns out that there is a bijection between variable assignments on $P$ and orientations of $G$. Similarly to the graph polynomial, the coefficients of $P$ can be determined by counting Eulerian orientations in $G$, 
and the following theorem relating $F$-avoiding orientations and Alon-Tarsi orientations holds. 
In fact, the theorem stated in \cite{akbari2020orientations} is slightly weaker, but the following version is an easily proven corollary. 

\begin{theorem}[\cite{akbari2020orientations}]\label{thm:2}
Let $G$ be a graph, let $H$ be a spanning subgraph of $G$, and let $F:V(G)\xrightarrow{}2^{\mathbb{N}}$ be a map. If there exists an Alon-Tarsi orientation $D$ of $H$ such that $|F(v)|\leq \deg_D^+(v)$ for every vertex $v\in V(G)$, then $G$ has an  $F$-avoiding orientation.
\end{theorem}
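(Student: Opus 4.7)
The proof plan is to apply the Combinatorial Nullstellensatz (Theorem \ref{thm:CNSS}) directly to a polynomial in edge-orientation variables, extending the strategy from \cite{akbari2020orientations} so that it handles a proper spanning subgraph $H \subsetneq G$ simply by padding the polynomial with linear factors that are identically nonzero on $\{-1,1\}^{E(G)}$.

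First, I would fix any reference orientation $D_0$ of $G$ that extends $D$ (orient the edges in $E(G) \setminus E(H)$ arbitrarily). For each edge $e \in E(G)$ introduce a variable $y_e \in \{-1,1\}$ recording whether $e$ is oriented as in $D_0$ ($+1$) or reversed ($-1$). Setting $\sigma(v,e) = +1$ if $e$ leaves $v$ in $D_0$ and $\sigma(v,e) = -1$ otherwise,
$$\deg^+(v) \;=\; \tfrac{1}{2}\Bigl(\deg_G(v) + \sum_{e \ni v} \sigma(v,e)\, y_e\Bigr).$$
For each pair $(v, f)$ with $f \in F(v)$, the linear form $L_{v,f}(\mathbf y) := (\deg_G(v) - 2f) + \sum_{e \ni v} \sigma(v,e)\, y_e$ satisfies $L_{v,f}(\mathbf y) = 2(\deg^+(v) - f)$, so the orientation encoded by $\mathbf y$ is $F$-avoiding if and only if $P(\mathbf y) := \prod_{v,\, f \in F(v)} L_{v,f}(\mathbf y) \neq 0$.

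Next, for each $v$ I pad $P$ by $\deg_D^+(v) - |F(v)| \geq 0$ additional factors of the form $L_{v,k}$ with $k \notin [0, \deg_G(v)]$; each such factor is nonzero for every $\mathbf y \in \{-1,1\}^{E(G)}$ because $\deg^+(v)$ always lies in $[0, \deg_G(v)]$. The resulting polynomial $\tilde P$ has the same zero set as $P$ on $\{-1,1\}^{E(G)}$ and total degree $\sum_v \deg_D^+(v) = |E(H)|$. The crucial step is to show that the coefficient of the squarefree monomial $\prod_{e \in E(H)} y_e$ in $\tilde P$ equals $\pm \prod_v \deg_D^+(v)! \cdot (EE(D) - EO(D))$: expanding the top-degree part $\prod_v \bigl(\sum_{e \ni v} \sigma(v,e)\, y_e\bigr)^{\deg_D^+(v)}$ as a multinomial, each contribution to this monomial corresponds to an orientation $D'$ of $H$ with $\deg_{D'}^+(v) = \deg_D^+(v)$ for all $v$ (equivalently, $D' = D$ with an Eulerian subgraph $S$ reversed), and the resulting sign is $(-1)^{|S|}$. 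Summing over $S$ gives exactly $EE(D) - EO(D)$, which is nonzero because $D$ is Alon--Tarsi.

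Finally, I would invoke the Combinatorial Nullstellensatz with $S_e = \{-1,1\}$ (so $|S_e| = 2$ exceeds each exponent, which is $0$ or $1$) to extract an assignment $\mathbf y \in \{-1,1\}^{E(G)}$ with $\tilde P(\mathbf y) \neq 0$; by construction this also gives $P(\mathbf y) \neq 0$, producing the desired $F$-avoiding orientation of $G$. The main obstacle I anticipate is the multinomial calculation in the previous paragraph, where one must carefully track the sign conventions and verify that only orientations obtained from $D$ by reversing an Eulerian subgraph contribute to the monomial of interest; everything else amounts to routine bookkeeping.
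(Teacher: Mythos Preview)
Your argument is correct and follows essentially the same strategy as the paper: apply the Combinatorial Nullstellensatz to the edge-variable polynomial and show that the coefficient of the squarefree monomial $\prod_{e\in E(H)} y_e$ is a nonzero multiple of $EE(D)-EO(D)$. The only difference is in how that coefficient is identified. The paper passes through the duality of Theorem~\ref{thm:duals}: it observes that the dual polynomial $f^*$ is the graph polynomial of $H$, invokes the classical Alon--Tarsi identity (\ref{eqn:AT}) for the coefficient of $\prod_v x_v^{t_v}$ in $f^*$, and then uses the permanent relation to transfer this back to the coefficient of $y^\beta$ in $f$. You instead compute the coefficient directly by a multinomial expansion, reading off the bijection between contributing terms and orientations $D'$ of $H$ with the same out-degree sequence as $D$, i.e.\ Eulerian reversals of $D$. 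Your route is more self-contained and avoids the permanent/duality machinery; the paper's route has the virtue of exhibiting Theorem~\ref{thm:2} as a special case of a general principle relating row- and column-polynomials of the incidence matrix. Your explicit padding of $P$ by factors $L_{v,k}$ with $k\notin[0,\deg_G(v)]$ is exactly the device that makes the degrees match ($\sum_v \deg_D^+(v)=|E(H)|$), which the paper handles only implicitly by taking $t_v=\deg_D^+(v)$ and then concluding for all $F$ with $|F(v)|\le t_v$.
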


Theorem \ref{thm:2} shows a deep connection between the list coloring and $F$-avoiding orientation problems.
We will in fact see in Theorem \ref{thm:duals} that this connection is a special case of a more general duality between \emph{dual polynomials} obtained from a common matrix.
In the case of Theorem \ref{thm:2}, the polynomials for list coloring and $F$-avoiding orientations are related via the incidence matrix of a graph.
Furthermore, by using Theorem \ref{thm:2} 
along with a result of Huang, Wong, and Zhu \cite{Huang},
one can immediately prove Conjecture \ref{conj:1} 
for the complete graph $K_n$. Indeed, 
Huang et al.~\cite{Huang} proved that if $N$ is a maximum matching in $K_n$, 
then $\AT(K_n - N) = \lceil n/2 \rceil$,
which implies that $K_n - N$ has an Alon-Tarsi orientation in which each vertex has its out-degree at least 
$( n-2 ) - \lceil n/2 \rceil + 1  = \left \lfloor \frac{1}{2}n - 1 \right \rfloor $.
Hence, Theorem \ref{thm:2} implies that $K_n$ has an $F$-avoiding orientation whenever $|F(v)| \leq \frac{1}{2}n- 1$ for each vertex $v$.

Since every graph $G$ has an orientation $D$ in which $\deg_D^+(v) \geq \left \lfloor \frac{1}{2} \deg_G(v) \right \rfloor$ for each vertex $v$, and since $EE(D) > EO(D) = 0$ whenever $G$ is bipartite, Theorem \ref{thm:2} also implies that a bipartite graph $G$ has an $F$-avoiding orientation whenever $|F(v)| \leq \frac{1}{2}\deg_G(v) $ for each $v$. 
Furthermore, it is well known that every graph $G$ contains a spanning bipartite subgraph $H$ satisfying $\deg_H(v)\geq \frac{1}{2} \deg_G(v)$ for each vertex $v$. 
As a consequence, we have the following result.
\begin{theorem}[\cite{akbari2020orientations}]
\label{thm:fourth}
Let $G$ be a graph, and let $F:V(G)\xrightarrow{}2^{\mathbb{N}}$ be a map. If
$|F(v)|\leq \frac{1}{4}\deg_G(v)$ 
for each vertex $v\in V(G)$, then $G$ has an $F$-avoiding orientation.
\end{theorem}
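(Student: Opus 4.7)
The strategy is essentially laid out in the paragraph preceding the theorem, and my plan is to formalize it by combining three ingredients: (1) a well-known max-cut style result guaranteeing a large bipartite subgraph, (2) an orientation that balances out- and in-degrees, and (3) Theorem~\ref{thm:2}. So I would first invoke the classical fact that every graph $G$ admits a spanning bipartite subgraph $H$ with $\deg_H(v) \geq \lceil \tfrac{1}{2}\deg_G(v)\rceil$ for every $v \in V(G)$, obtained by iteratively moving vertices across a bipartition to non-decrease the cut until a locally optimal cut is reached.

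Next, I would orient $H$ so that $\deg_D^+(v) \geq \lfloor \tfrac{1}{2}\deg_H(v)\rfloor$ at every vertex. This can be done by the Eulerian-extension trick: add an auxiliary vertex $w$ adjacent to every odd-degree vertex of $H$, take an Eulerian orientation of the resulting (all-even-degree) graph, and restrict back to $H$. The restriction decreases the out-degree at each vertex by at most the edges incident to $w$, which is why one obtains the floor bound. Combining this with the bipartite-subgraph estimate and a short case check gives
\[
\deg_D^+(v) \;\geq\; \lfloor \tfrac{1}{2}\deg_H(v)\rfloor \;\geq\; \lfloor \tfrac{1}{4}\deg_G(v)\rfloor \;\geq\; |F(v)|,
\]
the last inequality using that $|F(v)|$ is an integer no larger than $\tfrac{1}{4}\deg_G(v)$.

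The crucial structural observation is that $D$ is an Alon-Tarsi orientation of $H$. Since $H$ is bipartite, every closed walk in $H$ has even length, so every Eulerian subgraph of $H$ has an even number of edges. Consequently $EO(D) = 0$, while the empty subgraph is trivially an even Eulerian subgraph, giving $EE(D) \geq 1 > EO(D)$.

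Finally, I would feed $H$, $D$, and $F$ into Theorem~\ref{thm:2}: all hypotheses have been verified, and the theorem then produces the desired $F$-avoiding orientation of $G$. There is no real obstacle here, as every step is either classical (the bipartite subgraph and the balanced orientation) or essentially immediate (bipartiteness forcing $EO(D)=0$); the only thing one must take a little care with is keeping the floors and ceilings aligned in the chain of inequalities so that the bound $|F(v)| \leq \tfrac{1}{4}\deg_G(v)$ really does translate into $|F(v)| \leq \deg_D^+(v)$ after passing to $H$ and orienting.
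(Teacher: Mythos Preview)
Your proposal is correct and follows essentially the same route as the paper: pass to a spanning bipartite subgraph $H$ with $\deg_H(v)\ge \tfrac12\deg_G(v)$, take a balanced orientation $D$ of $H$, note that bipartiteness forces $EO(D)=0<EE(D)$, and apply Theorem~\ref{thm:2}. Your care with the floor/ceiling chain is fine (in fact $\lfloor\tfrac12\lceil d/2\rceil\rfloor\ge\lfloor d/4\rfloor$ for all $d$), so the inequalities line up as claimed.
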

%
Using a recent result of Lov\'asz, Thomassen, Wu, and Zhang \cite{LovaszThomassen} on Tutte's $3$-flow conjecture,
Akbari et al.~\cite{akbari2020orientations}
also proved that 
when $G$ is $d$-regular and  $d$-edge-connected, it is enough to require that $|F(v)|\leq (d-5)/3$ for each vertex $v \in V(G)$ in order to guarantee an $F$-avoiding orientation. 

Let us now state the main results of this paper.
First, we introduce the following theorem, which gives a factor-$\frac{2}{3}$ approximation towards Conjecture \ref{conj:2}.

\begin{theorem}\label{thm:two_thirds}
Let $G$ be a graph, and let $F:V(G) \rightarrow 2^{\mathbb N}$. If $G$ has an orientation $D$ such that 
\[ |F(v)|\le \frac{2}{3}\deg^+_D(v)-1\]
for each $v \in V(G)$, then $G$ has an $F$-avoiding orientation.
\end{theorem}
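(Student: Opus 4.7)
The plan is to apply Theorem \ref{thm:2} to a well-chosen spanning subgraph $H \subseteq G$ equipped with an Alon-Tarsi orientation $D'$ satisfying $\deg_{D'}^+(v) \ge |F(v)|$ at each vertex. Rewriting the hypothesis as $\deg_D^+(v) \ge \tfrac{3}{2}(|F(v)|+1)$, we see that about $\tfrac{1}{3}\deg_D^+(v)$ out-edges per vertex may be discarded while still meeting the bound required by Theorem \ref{thm:2}.

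A natural first attempt is to take $H$ to be a spanning bipartite subgraph of $G$, together with an orientation $D'$ of $H$ satisfying $\deg_{D'}^+(v) \ge |F(v)|$ at each vertex. Any orientation of a bipartite graph is automatically Alon-Tarsi: every Eulerian subgraph $H'$ decomposes into cycles, all of even length in the bipartite case, so $|E(H')|$ is even and $EO(D')=0$, while the empty subgraph contributes $1$ to $EE(D')$. Theorem \ref{thm:2} would then close the argument. Unfortunately, a uniformly random bipartition yields only $\tfrac{1}{2}\deg_G(v)$ crossing edges in expectation, which falls short of the $\tfrac{2}{3}$ target once $\deg_D^+(v)$ is close to $\deg_G(v)$, so the bipartite route alone looks insufficient.

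I therefore expect the proof to introduce a new Combinatorial Nullstellensatz-based sufficient condition, foreshadowed in the introduction as a \emph{new sufficient condition}, that extends Theorem \ref{thm:2} beyond bipartite subgraphs. A natural candidate is the polynomial $P(y) = \prod_{v \in V(G)} \prod_{i \in F(v)} \bigl(L_v(y) - i\bigr)$ in variables $y_e \in \{-1, 1\}$, where $L_v(y)$ is the out-degree of $v$ under the orientation obtained from $D$ by reversing $e$ precisely when $y_e = -1$. An $F$-avoiding orientation corresponds to $y \in \{-1,1\}^{E(G)}$ with $P(y) \ne 0$, so Theorem \ref{thm:CNSS} reduces the task to exhibiting a multilinear monomial of degree $\sum_v |F(v)|$ with nonzero coefficient in $P$. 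The main obstacle will be producing such a monomial, equivalently an edge subset $T \subseteq E(G)$ with $|T| = \sum_v |F(v)|$, and verifying nonvanishing of its coefficient via a Hall-type or permanent-like count; the factor $\tfrac{2}{3}$ should emerge from this combinatorial condition, with $|F(v)| \le \tfrac{2}{3}\deg_D^+(v) - 1$ ensuring that each vertex has enough out-edges in $D$ to support the required signed sum without unintended cancellation.
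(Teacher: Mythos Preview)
Your proposal correctly diagnoses that the bipartite route via Theorem~\ref{thm:2} is insufficient and that a new Nullstellensatz-based criterion is needed, and your polynomial $P(y)$ is indeed the right object. But the proposal is not a proof: the third paragraph only speculates about the existence of a suitable multilinear monomial and gestures at ``Hall-type or permanent-like'' reasons for nonvanishing, without supplying either the criterion or the construction that realizes the~$\tfrac{2}{3}$ bound.

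The paper's argument has two concrete pieces that your sketch is missing. First, the new sufficient condition (Theorem~\ref{thm:h}) is not stated in terms of a single edge subset $T$ but rather via a \emph{vertex ordering} together with a spanning subgraph $H$, requiring
\[
|F(v)| \le \deg_G^L(v) - 2\deg_H^L(v) + \deg_H^R(v)
\]
at every vertex; the nonvanishing of the relevant square-free coefficient is proved inductively along the ordering, using Gottlieb's full-rank result on set-inclusion matrices rather than a Hall/permanent argument. Second, the $\tfrac{2}{3}$ arises from a specific construction: order $V(G)$ so as to minimize the number of forward edges of $D$ (which forces $\deg^{L+}(v)\ge\deg^{L-}(v)$), assign fractional weight $\tfrac{2}{3}$ to each forward edge and $0$ to each backward edge, check that the resulting fractional ``$H$'' satisfies the inequality above, and then round to an integral $H$ via an acyclic-support argument (Lemma~\ref{lem:Integer_Rounding}) losing at most a constant. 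None of this machinery---the ordering, the $L/R$ asymmetry in the criterion, the minimizing-forward-edges trick, or the fractional rounding---appears in your outline, and without them there is no visible path from the polynomial $P$ to the claimed bound.
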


Since every graph has an orientation $D$ in which each $v \in V(G)$ satisfies $\deg_D^+(v) \geq  \left \lfloor \frac{1}{2} \deg_G(v) \right \rfloor$, Theorem \ref{thm:two_thirds} also 
implies that Conjecture \ref{conj:1} holds with a coefficient of roughly $\frac{1}{3}$, which improves Theorem \ref{thm:fourth} (see our Theorem \ref{thm:third}). We also show that if $G$ is a graph whose maximum degree is subexponential in terms of its minimum degree, then Conjecture \ref{conj:1} holds with an even greater coefficient of roughly $\sqrt 2 - 1 \approx 0.414$. 

\begin{theorem}
\label{thm:reg_intro}
Let $G$ be a graph of minimum degree $\delta$ and maximum degree $\Delta = e^{o(\delta)}$, and 
let $F:V(G) \rightarrow 2^{\mathbb N}$.
If \[|F(v) |\leq \left (\sqrt 2 - 1 - o(1) \right ) \deg_G(v)\]
for each $v \in V(G)$, then $G$ has an $F$-avoiding orientation.
\end{theorem}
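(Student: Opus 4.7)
The strategy is to invoke Theorem~\ref{thm:2}: it suffices to exhibit a spanning subgraph $H \subseteq G$ together with an Alon--Tarsi orientation $D$ of $H$ satisfying $\deg^+_D(v) \ge (\sqrt 2 - 1 - o(1))\deg_G(v)$ at every vertex $v$. The natural building block is a spanning bipartite subgraph $B$ of $G$ in which every vertex keeps at least a $(\tfrac12-o(1))$-fraction of its edges.

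To produce such a $B$, I would randomly $2$-color $V(G)$ and apply the Lov\'asz Local Lemma. For each vertex $v$, the ``bad'' event that $\deg_B(v) < (\tfrac12 - o(1))\deg_G(v)$ has probability $\exp(-\Omega(\delta))$ by Chernoff applied to $\mathrm{Bin}(\deg_G(v),\tfrac12)$, and each such event depends on at most $O(\Delta^2)$ others (those rooted within distance $2$). The subexponential hypothesis $\Delta = e^{o(\delta)}$ is exactly what makes the symmetric LLL condition $e\cdot p\cdot (d+1) = e^{1 - \Omega(\delta) + o(\delta)} = o(1)$ hold for large $\delta$, and so delivers the desired $B$.

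Next I would augment $B$ by a carefully chosen set $A$ of monochromatic edges of $G$ (edges in $G[X]$ or $G[Y]$) to form $H = B \cup A$. Introduce a parameter $\alpha \in (0,\tfrac12]$ and orient each edge of $B$ independently at random so that it is directed $X\to Y$ with probability $\alpha$ and $Y\to X$ with probability $1-\alpha$; orient the edges of $A$ to compensate for the side whose bipartite contribution is smaller. Standard concentration then gives $\deg^+_D(v) \approx \alpha\deg_B(v) + (\text{contribution from }A)$ for $v \in X$ and symmetrically for $v \in Y$. Balancing the two sides against a common target $c\cdot \deg_G(v)$ while forcing $|A|$ to remain small enough to preserve the Alon--Tarsi condition produces the critical equation $\alpha^2 + 2\alpha = 1$, whose positive root $\alpha = \sqrt 2 - 1$ yields the coefficient of the theorem.

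The main technical obstacle is verifying the Alon--Tarsi property of $D$: once we include the monochromatic edges of $A$, the graph $H$ is no longer bipartite and one cannot simply invoke $EO(D)=0$. I would attack this via a second-moment/concentration argument, computing $\mathbb{E}[EE(D) - EO(D)]$ under the random orientation of $B$ (treating $A$'s orientation as fixed), showing that this expectation is nonzero, and establishing concentration by a martingale inequality of Azuma or Talagrand type. The subexponential-degree assumption reappears here in its most essential role, providing the slack to absorb the union bound over the vertex events that must simultaneously satisfy both the out-degree estimate and the non-vanishing of $EE(D) - EO(D)$. Combining these guarantees with Theorem~\ref{thm:2} then produces the $F$-avoiding orientation.
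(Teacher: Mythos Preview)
Your approach has a genuine gap at the step you yourself flag as the ``main technical obstacle.'' The quantity $EE(D)-EO(D)$ is a signed count over all Eulerian subgraphs of $H$; it can be exponential in $|E(H)|$, and flipping a single edge of $D$ can change the parity of a huge number of Eulerian subgraphs, so there is no bounded-difference structure to which Azuma or Talagrand would apply. Nor is there a clean formula for $\mathbb{E}[EE(D)-EO(D)]$ under your random orientation that one could show is nonzero. In short, proving that a randomly oriented non-bipartite $H$ is Alon--Tarsi with high probability is not within reach of standard concentration tools, and the proposal offers no substitute. (Relatedly, the equation $\alpha^2+2\alpha=1$ does not actually fall out of the bipartition-plus-compensation setup you describe; balancing $\alpha\cdot\tfrac12 d$ against $(1-\alpha)\cdot\tfrac12 d$ with an additive $A$-contribution does not produce that quadratic, so the appearance of $\sqrt2-1$ here looks reverse-engineered.)

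The paper sidesteps the Alon--Tarsi obstacle entirely by \emph{not} using Theorem~\ref{thm:2}. Instead it applies Theorem~\ref{thm:h}, which only asks for a vertex ordering and a spanning subgraph $H$ with
\[
\deg^L_G(v) - 2\deg^L_H(v) + \deg^R_H(v) \ \ge\ (\sqrt2 - 1 - o(1))\deg_G(v)
\]
at every vertex---a purely local degree condition. The ordering is produced by a random map $\phi:V(G)\to[0,1]$, and $H$ is built by including an edge $uv$ with a probability $p(\phi(u),\phi(v))$ chosen so that the expected value of the left side equals $(\sqrt2-1)\deg_G(v)$ regardless of $\phi(v)$. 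Chernoff plus the symmetric Lov\'asz Local Lemma (using $\Delta=e^{o(\delta)}$ exactly as you do) then makes the inequality hold simultaneously. The constant $\sqrt2-1$ arises from this balancing and, as the paper shows, is sharp for the method of Theorem~\ref{thm:h}.
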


Note that in the statement of Theorem \ref{thm:reg_intro}, our function $o(1)$ approaches $0$ as $\delta \rightarrow \infty$, and the rate at which $o(1)$ approaches $0$ depends on how quickly $\delta$ dominates $\log \Delta$.

One of the novel features in this paper is a general setup of dual graph polynomials (see Section \ref{sec:poly}) and the use of multiplied incidence matrix permanents. Special cases of these ideas have been used previously in related problems (\cite{Crump17, CrDVYe16, DV00, ZhuDelta1, Seamone, Zhu15}), but here we provide a unified general treatment for the first time.

Our paper is organized as follows. In Section \ref{sec:suff}, we prove a sufficient condition (Theorem \ref{thm:h}) for the existence of an $F$-avoiding orientation in a graph $G$, 
which is the main tool that we use throughout the paper. 
To prove our sufficient condition, we apply the Combinatorial Nullstellensatz using a polynomial similar to the one used in~\cite{akbari2020orientations}. 
In Section \ref{sec:poly},
we show that our polynomial in Section \ref{sec:suff} has a dual relationship with the traditional graph polynomial via the multiplied incidence matrix of a graph, and we show that this relationship is a special case of a more general duality relation.
In Section \ref{sec:int_rounding}, we establish a lemma about fractionally weighted subgraphs, 
and we use this lemma to
prove Theorem \ref{thm:two_thirds}.
In Section \ref{sec:reg}, we make use of a randomized approach involving the Lov\'asz Local Lemma to prove Theorem \ref{thm:reg_intro}.
Finally, in Section \ref{sec:conc}, we pose some open questions.

\section{A sufficient condition for an $F$-avoiding orientation}
\label{sec:suff}

In this section, we give a sufficient condition based on the Combinatorial Nullstellensatz (Theorem \ref{thm:CNSS}) for the existence of an $F$-avoiding orientation in a graph.

Henceforth, we make a technical change to the definition of an $F$-avoiding orientation. Given a graph $G$ with an orientation $D$ and a function $F:V(G) \rightarrow 2^{\mathbb Z}$,
rather than considering the out-degree of each vertex, we consider the \emph{imbalance} of each vertex $v \in V(G)$, 
which is defined by Mubayi, Will, and West \cite{imbalance} 
as the difference $\deg_D^+(v) - \deg_D^-(v)$. We say that $D$ is
\emph{$F$-avoiding} if $\deg_D^+(v) - \deg_D^-(v) \not \in F(v)$ for each $v \in V(G)$. 
This change is for technical reasons which will become clear later in this section.
Since the imbalance of a vertex in a given graph can be uniquely determined from its out-degree and vice versa,
and since our main result of this section is only concerned with the size of each forbidden set $F(v)$, 
our result still holds when the original definition of an $F$-avoiding orientation is used.


Before stating our condition, we need to establish some notation.
We consider a graph $G$.
We order its vertices as $v_1, \dots, v_n$, and we define an \emph{incidence matrix} $M = (m_{ve}: v \in V(G), e \in E(G))$ for $G$ with respect to the acyclic orientation on $G$ in which each edge $v_i v_j$ is oriented from $v_i$ to $v_j$ if $i < j$.
For each vertex $v_i \in V(G)$, we let $E_G^R(v_i)$ denote the edges $v_i v_j \in E(G)$ with $j > i$, and we let $E_G^L(v_i)$ denote the edges $v_i v_j \in E(G)$ with $j < i$. 
Similarly, we write $\deg_G^L(v_i) = |E_G^L(v_i)|$ and $\deg_G^R(v_i) = |E_G^R(v_i)|$.
For each edge $e \in E(G)$, we consider a variable $y_e$. 
Given an orientation $D$ of $G$, and given an edge $e = v_i v_j$ with $i < j$, we set $y_e=1$ if $e$ is oriented from $v_i$ to $v_j$ in $D$, and we set
$y_e=-1$ otherwise. 
Given $D$, we observe that 
the imbalance of each vertex $v$ can be expressed as a linear polynomial:
\[\deg_D^+(v) - \deg_D^-(v) =   \sum_{e \in E^R_G(v)} y_e - \sum_{e \in E^L_G(v)} y_e = \sum_{e \in E(G)} m_{ve} y_e.\]

Now, suppose that for each vertex $v_i \in V(G)$, we have a list $F(v_i)$ of $t_i$ integers. We would like to find an
$F$-avoiding orientation of $G$, that is, an
orientation $D$ of $E(G)$ such that for each vertex $v \in V(G)$, the imbalance of $v$ satisfies $\deg_D^+(v) - \deg_D^-(v) \not \in F(v)$. 
We fix a field $K$ of characteristic $0$, and over $K$ we define the polynomial\footnote{Here, we consider the integer values $a \in F(v_i)$ as multiples of the multiplicative identity $1 \in K$. Since $K$ has characteristic $0$, every element $a \in F(v_i)$ can be expressed as a multiple of $1$ in $K$.} 
\[f_0 = \prod_{i= 1}^n \prod_{a \in F(v_i)} \left (  \sum_{e \in E^R_G(v_i)} y_e - \sum_{e \in E^L_G(v_i)} y_e - a  \right ).\]
By the Combinatorial Nullstellensatz (Theorem \ref{thm:CNSS}), if $f_0$ has a nonzero term of degree $\deg f_0$ for which the maximum exponent of each variable $y_e$ is $1$, then
there exists a vector in $\{-1,1\}^{E(G)}$ at which $f_0$ has a nonzero evaluation, and hence
$G$ also has an $F$-avoiding orientation.

For each vertex $v \in V(G)$, we write $t_v = |F(v)|$.
We observe that $\deg f_0 = \sum_{v \in V(G)} t_v$ and $t_i = t_{v_i}$ for each $v_i \in V(G)$.
Since each term of degree $\deg f_0 = \sum_{v \in V(G)} t_v$ in the monomial expansion of $f_0$ is obtained by choosing exactly one $y_e$ term from each factor
$\left (  \sum_{e \in E^R_G(v_i)} y_e - \sum_{e \in E^L_G(v_i)} y_e - a  \right )$, 
the terms of maximum degree in the monomial expansion of $f_0$ are exactly the same as the terms of maximum degree in the monomial expansion of 
\[   f :=  \prod_{i= 1}^n  \left (  \sum_{e \in E^R_G(v_i)} y_e - \sum_{e \in E^L_G(v_i)} y_e \right )^{t_i}
=  \prod_{v \in V(G)}   \left (   \sum_{e \in E(G)} m_{ve} y_e \right )^{t_v}
.\]
Therefore, rather than working with $f_0$, we work with the simpler polynomial $f$.
By our previous discussion, if $f$ has a nonzero term of degree 
$\sum_{v \in V(G)} t_v$ for which the maximum exponent of each variable $y_e$ is $1$, then
$G$ has an $F$-avoiding orientation.



Given an edge set $A \subseteq E(G)$, we write $y^A = y_{e_1} y_{e_2} \dots y_{e_{|A|}}$, where $A = \{e_1, e_2, \dots, e_{|A|}\}$.
We say that a monomial $P \in K[y_e:e \in E(G)]$ is \emph{square-free} if for each edge $e \in E(G)$, $y_e^2$ does not divide $P$. Observe that after removing its coefficient, a square-free monomial $P \in K[y_e: e \in E(G)]$ is of the form $y^A$ for some edge set $A \subseteq E(G)$. Therefore, if the monomial expansion of $f$ contains a square-free term with a nonzero coefficient, then by the Combinatorial Nullstellensatz, $G$ has an $F$-avoiding orientation.

Now, we are ready to state our sufficient condition for when $G$ has an $F$-avoiding orientation. Although we prove the following theorem within the framework of forbidden vertex imbalances, it is easy to see that the theorem also holds in the setting of forbidden vertex out-degrees.

\begin{theorem}
\label{thm:h}
Let $F:V(G) \rightarrow 2^{\mathbb Z}$ be an assignment of forbidden imbalances for a graph $G$.
Suppose that there exists an ordering of $V(G)$ and a spanning subgraph $H$ of $G$ such that for each vertex $v \in V(G)$, \[ |F(v)| \leq \deg^L_G(v) - 2 \deg^L_H(v) + \deg^R_H(v).\]
Then $G$ has an $F$-avoiding orientation.
\end{theorem}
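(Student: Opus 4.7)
The plan is to apply the Combinatorial Nullstellensatz (Theorem~\ref{thm:CNSS}) to the polynomial $f = \prod_{v\in V(G)} \ell_v^{t_v}$ constructed in the discussion preceding the theorem (with $t_v := |F(v)|$ and $\ell_v = \sum_e m_{ve}y_e$): by that discussion, it suffices to exhibit a square-free monomial $y^A$ of degree $\sum_v t_v$ whose coefficient in $f$ is nonzero, since CNSS then guarantees a point in $\{-1,+1\}^{E(G)}$ at which $f$ is nonzero, and the induced orientation is $F$-avoiding.

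Expanding the product, the coefficient of $y^A$ equals $\prod_v t_v!$ times the signed sum $\sum_{\phi}\prod_e m_{\phi(e),e}$, where $\phi$ ranges over all partitions $A=\bigsqcup_v A_v$ with $|A_v|=t_v$ and $A_v\subseteq E_G(v)$. Viewing each $\phi$ as an orientation of the (undirected) graph $A$ with $\phi(e)$ the tail, any two such orientations with the same out-degree sequence $(t_v)_v$ differ by reversing an Eulerian sub-orientation of the underlying digraph, and each individual edge reversal flips the sign by $-1$. Consequently the coefficient of $y^A$ equals, up to a global sign, $\prod_v t_v!\cdot\bigl(EE(D^*)-EO(D^*)\bigr)$ for any reference orientation $D^*$ of $A$ with out-degree sequence $(t_v)_v$. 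It then suffices to exhibit $A$ and a Alon--Tarsi reference orientation $D^*$; in particular, an acyclic $D^*$ works, as it forces $EE(D^*)-EO(D^*)=1$.

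The spanning subgraph $H$ is used to build $A$ and $D^*$. Rewriting $d_v := \deg^L_G(v) - 2\deg^L_H(v) + \deg^R_H(v) = \deg^L_{G\setminus H}(v) + (\deg^R_H(v)-\deg^L_H(v))$, one checks $\sum_v d_v = |E(G\setminus H)|$. The proposal is to take $A\subseteq E(G\setminus H)$ of size $\sum_v t_v$, and to construct $D^*$ by starting from the trivially-acyclic forward orientation of $G\setminus H$ (with out-degrees $\deg^R_{G\setminus H}(v)$) and then flipping edges along carefully chosen paths so that the out-degree at each vertex becomes $t_v$. The required net change at $v$ is governed by the ``$H$-imbalance'' $\deg^R_H(v)-\deg^L_H(v)$ together with the slack $d_v - t_v$, both of which are made available by the hypothesis. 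In the extremal case $t_v=d_v$ we take $A=E(G\setminus H)$; for $t_v<d_v$ we first pare $A$ down by removing $d_v-t_v$ edges at each vertex and then proceed on the smaller instance.

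The main obstacle is verifying that the path-flipping can be carried out while simultaneously (a) achieving the prescribed out-degree sequence $(t_v)_v$ and (b) keeping $D^*$ Alon--Tarsi. It is not always possible to make $D^*$ strictly acyclic (a small $K_4$ example with $H$ a single ``long'' edge already shows this), so in general one must argue that the even and odd Eulerian sub-orientations of $D^*$ do not cancel. I expect this to follow by induction on $|E(H)|$, using the fact that adding a single edge to $H$ shifts $d_v$ by $-2$ at its larger endpoint and $+1$ at its smaller endpoint --- exactly the local adjustment produced by re-orienting a single $G\setminus H$-path between those endpoints --- combined with the Alon--Tarsi parity invariance under such path-flips. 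Once such $A$ and $D^*$ are exhibited, the coefficient of $y^A$ in $f$ is nonzero and CNSS completes the proof.
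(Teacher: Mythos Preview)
Your reduction to finding a square-free $y^A$ with nonzero coefficient, and the identification of that coefficient (up to sign and the factor $\prod_v t_v!$) with $EE(D^*)-EO(D^*)$ for any orientation $D^*$ of $A$ with out-degree sequence $(t_v)$, are both correct and match the duality developed in Section~\ref{sec:poly}. The gap is in the construction of $A$ and $D^*$.

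The restriction $A \subseteq E(G\setminus H)$ is too strong and already fails in the extremal case $t_v = d_v$. Take $G = K_4$ on $v_1,\dots,v_4$ in this order and $H = \{v_1v_3, v_1v_4\}$. Then $(d_{v_1},d_{v_2},d_{v_3},d_{v_4}) = (2,1,0,1)$, while $G\setminus H = \{v_1v_2, v_2v_3, v_2v_4, v_3v_4\}$ leaves $v_1$ with only one incident edge. No orientation of $E(G\setminus H)$ can have out-degree $2$ at $v_1$, so no $A\subseteq E(G\setminus H)$ works; a valid $A$ (for instance $\{v_1v_2,v_1v_3,v_2v_4,v_3v_4\}$) must use an edge of $H$. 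Path-flipping inside $G\setminus H$ cannot repair this, and your inductive step on $|E(H)|$ is stated only as an expectation. Note also that ``paring $A$ down by removing $d_v-t_v$ edges at each vertex'' is not well-defined, since removing an edge changes the count at two vertices simultaneously.

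The paper's proof avoids constructing $D^*$ altogether. It works inductively in the quotient ring $K[y_e:e\in E]/\langle y_e^2\rangle$: at step $j$ it maintains an edge set $A_j$ with $y^{A_j}$ in the support of $f_j = \prod_{i\le j}\ell_{v_i}^{t_i}$, subject to the side condition $A_j \cap E^L_G(v_k) \subseteq E^L_H(v_k)$ for all $k>j$. Passing from $j-1$ to $j$ amounts to showing that multiplication by a power of $\sum_{e\in E^L_G(v_j)} y_e$ does not annihilate the relevant part of $f_{j-1}$ in the quotient ring; this is handled by Gottlieb's theorem that the inclusion matrix of $d$-subsets in $b$-subsets of an $\ell$-set has full column rank whenever $d\le b\le \ell-d$, and the hypothesis $t_j \le \deg^L_G(v_j) - 2\deg^L_H(v_j) + \deg^R_H(v_j)$ is exactly what places the parameters in this range.
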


\begin{proof}
In the proof we use the notation introduced above. We also set $E = E(G)$.
In order to prove that $G$ has an $F$-avoiding orientation, we show that the monomial expansion of $f$ over $K$ contains a square-free term and then apply the Combinatorial Nullstellensatz as described above.
Given an edge set $A \subseteq E$, we say that $y^A$ is in the \emph{support} of $f$ if the monomial $y^A$ has nonzero coefficient in the expansion of $f$. If $y^A$ is in the support of $f$, we also say that $A$ is in the support of $f$, and we write $A \in \supp(f)$.
For each $j$ ($1 \leq j \leq n$), we write 
\[   f_j =  \prod_{i= 1}^j  \left (  \sum_{e \in E^R_G(v_i)} y_e - \sum_{e \in E^L_G(v_i)} y_e \right )^{t_i},\]
and we observe that $f = f_n$. 

We prove the following stronger claim:
\medskip

For each $j$ ($1 \leq j \leq n$), there exists an edge set $A_j \subseteq E$ such that
\begin{enumerate}
\item[(a)] \label{i:a} ${A_j} \in \supp(f_j)$,
\item[(b)] \label{i:c}  if $k > j$, then $A_j \cap E^L_G(v_k) \subseteq E_H^L(v_k)$. 
\end{enumerate}
\medskip

By setting $j = n$ in the claim, we obtain an edge set $A_n \subseteq E$ which is in the support of $f_n = f$. 
Since $A_n$ is an edge set (and not a multiset), $y^{A_n}$ is square-free and thus satisfies the theorem.
When proving the claim, we work in the quotient ring \[  K' = K[y_e : e \in E]/ \langle y_e^2 : e \in E  \rangle,\] 
where $\langle 
y_e^2 : e \in E
\rangle$ is the ideal generated by the squares of the variables $y_{e}$. This allows us to ignore all terms in the expansion of each $f_j$ divisible by a square, which is desirable, as we are only interested in finding a square-free term in the expansion of $f$.

We prove the claim by induction on $j$. When $j = 1$, we observe that 
\[
    f_1 =  \left ( \sum_{e \in E_G^R(v_1)} y_e \right )^{t_1}  = t_1! \sum_{\substack {A \subseteq E^R_G(v_1) \\ |A| = t_1}}  y^A .
\]
Since $t_1 = |F(v_1)| \leq  \deg^L_G(v_1) - 2 \deg^L_H(v_1) + \deg^R_H(v_1) = \deg^R_H(v_1)$,
we can choose $A_1$ to be any set of $t_1$ edges of $H$ incident with $v_1$. Hence the claim holds for $j=1$.

Now, let $2 \leq j \leq n$, and suppose the claim holds for $j-1$.
That is, suppose that we have a set $A_{j-1}$ satisfying $y^{A_{j-1}} \in \supp(f_{j-1})$ 
and in particular
\[A_{j-1} \cap E_G^L(v_{j}) \subseteq E_H^L(v_{j}).\]
We would like to show that the claim holds for $j$. First, we will show that $f_j$ has a nonzero expansion in our quotient ring, implying the existence of a square-free monomial in the expansion of $f_j$ over $K'$. Then, we will show that the square-free monomial that we have found in the support of $f_j$ satisfies condition (b). As
\[f_j = f_{j-1} \cdot \left ( \sum_{e \in E^R_G(v_j)} y_e - \sum_{e \in E^L_G(v_j)}y_e \right )^{t_j}, \]
we can express $f_j$ in the following form: 
\begin{equation}
\label{eqn:fj}
 f_j = f_{j-1} \cdot \sum_{a = 0}^{t_j} {\binom{t_j}{a}} (-1)^{t_j - a}  \left ( \sum_{e \in E^L_G(v_j)} y_e \right )^{t_j - a} 
 \left(\sum_{e \in E^R_G(v_j)} y_e \right )^a.
\end{equation}
When the sum in (\ref{eqn:fj}) is restricted to a single value $a$, each monomial in the expansion of (\ref{eqn:fj}) has exactly $a$ variables $y_e$ for which $e \in E_G^R(v_j)$. 
Since none of these variables $y_e$ appears in $f_{j-1}$, any nonzero term in the expansion of (\ref{eqn:fj}) for a fixed value of $a$ is also a nonzero term in the expansion of (\ref{eqn:fj}) when the sum is taken over all values of $a$.

We restrict our attention to terms in the expansion of (\ref{eqn:fj}) that occur when $a = m:= \min \{t_j, \deg_H^R(v_j)\}$ in the sum. Hence, we only consider the expansion of
\begin{equation}
\label{eqn:afixed}
f_{j-1} \cdot \left( \sum_{e \in E^L_G(v_j)} y_e \right )^{t_j - m}   
\left(\sum_{e \in E^R_G(v_j)} y_e \right )^{m}.
\end{equation}

We fix a set $A' \subseteq E_H^R(v_j)$ with $m$ edges and observe that ${A'} \in  \supp \left ( \sum_{E_G^R(v_j)} y_e \right ) ^m$.
Furthermore, we write $B = A_{j-1} \setminus E_G^L(v_j)$, and we write
\[f_{j-1} = y^B g + r,\]
where $g$ and $r$ are polynomials, and $r$ does not have any terms divisible by $y^B$. Observe that every term of $g$ is a constant times a monomial $P$ of degree $d: = \deg (f_{j-1}) - |B| = |A_{j-1} \setminus B| = |A_{j-1} \cap E_G^L(v_j) | $ corresponding to a subset of $d$ edges of $E_G^L(v_j)$. 
Observe also that $A_{j-1} \cap E^L_G(v_j)$ is in the support of $g$.

Now, we would like to show that
\begin{equation}
\label{eqn:gneq0}
  g \cdot \left (  \sum_{e \in E^L_G(v_j)} y_e \right )^{t_j - m}  \neq 0 ,
\end{equation}
in the quotient ring $K'$. 
Observe that if 
(\ref{eqn:gneq0}) holds in $K'$,
 then since $B \cap E^L_G(v_j) = \emptyset$, it also holds that
\begin{equation}
\label{eqn:fj1}
  f_{j-1} \cdot \left (  \sum_{e \in E^L_G(v_j)} y_e \right )^{t_j - m}  = (y^B g + r)\left (  \sum_{e \in E^L_G(v_j)} y_e \right )^{t_j - m}
\end{equation}
is nonzero in $K'$. Also, since no $y_e$ with $e \in E^R_G(v_j)$ divides (\ref{eqn:fj1}) in $K'$, it follows that (\ref{eqn:afixed}) has a nonzero monomial expansion in $K'$ (and hence a square-free term) whenever (\ref{eqn:gneq0}) holds. Therefore, in order to prove condition (a) of our claim, it is enough to prove (\ref{eqn:gneq0}).


If $m = t_j$, then (\ref{eqn:gneq0}) clearly holds, and $  g \cdot \left (  \sum_{e \in E^L_G(v_j)} y_e \right )^{t_j - m} $ has a square-free monomial $y^S$
corresponding to an edge subset $S \subseteq E_G^L(v_j)$. Then, the edge set $A_j = A' \cup B \cup S$ corresponds to a square-free monomial in the support of $f_j$. By construction, the only edges of  $A_j \setminus A_{j-1}$ 
that do not belong to $E(H)$ are those that belong to $E^L_G(v_j)$, which implies condition (b).
Hence, the induction step is complete, and the claim is proven.

Otherwise, we have
$m = \deg^R_H(v_j)$.
We write $E^L = E^L_G(v_j)$ and $b = d + t_j - \deg_H^R(v_j)$. 
We also expand $g$ as 
\[ g = \sum_{\substack{D \subseteq E^L \\ |D| = d}} c_D y^D.\]
Then,
\begin{eqnarray}
\nonumber
 g\left(\sum_{e \in E^L} y_e \right)^{t_j - \deg_H^R(v_j)} &=& (t_j - \deg_H^R(v_j))! \left ( \sum_{\substack{D \subseteq E^L \\ |D| = d}}    c_D y^D \right ) \left (\sum_{\substack{U \subseteq E^L \\ |U| = t_j - \deg_H^R(v_j)}} y^U \right ) \\
 &=& (t_j - \deg_H^R(v_j))! \sum_{\substack{Y \subseteq E^L  \\ |Y| = b}}  \left (     \sum_{\substack{   D \subseteq Y \\ |D| = d }} c_D \right )  y^Y. \label{eq:g expands}
\end{eqnarray}
Hence, in order to show that (\ref{eqn:gneq0}) holds and prove the first condition of our induction hypothesis, it is enough to show that the expression above is nonzero in our quotient ring.

The expression (\ref{eq:g expands}) may be expressed as follows. Let $Q$ be the matrix whose rows are indexed by the $b$-subsets of $E^L$ and whose columns are indexed by the $d$-subsets of $E^L$. Given a set $Y \subseteq E^L$ of size $b$ and a set $D \subseteq E^L$ of size $d$, we let the $(Y,D)$-entry of $Q$ be $1$ if $D \subseteq Y$ and equal $0$ otherwise. In other words, $Q$ is a set-inclusion matrix.
Then, the coefficients in (\ref{eq:g expands}) are equal to the entries in the vector
\begin{equation}
\label{eqn:matrix}
(t_j - \deg_H^R(v_j))! \, Q \tau, 
\end{equation}
where $\tau$ is the column vector with entries indexed by subsets $D \subseteq E^L$ of size $d$ and whose $D$-entry equals $c_D$. Since $g$ is nonzero in our quotient ring, we have that $\tau \neq 0$.
Thus, to prove that (\ref{eqn:gneq0}) holds, it suffices to show that (\ref{eqn:matrix}) is nonzero.

Now, since $d = |A_{j-1} \cap E_G^L(v_j) |$, it follows from condition (b) 
of the induction hypothesis that $d \leq \deg^L_H(v_j)$. Therefore, $b \leq t_j + \deg^L_H(v_j) - \deg^R_H(v_j)$, which by our initial assumption is at most $|E^L| - \deg^L_H(v_j) \leq |E^L| - d$. Therefore, since $d \leq b \leq |E^L| - d$, it follows that $\binom{|E^L|}{b} \geq \binom{|E^L|}{d}$, and hence it follows from a classical result of Gottlieb \cite[Corollary 1 (b)]{gottlieb1966certain} about set-inclusion matrices that $Q$ has full column rank. Hence, the matrix product in (\ref{eqn:matrix}) is nonzero, and thus (\ref{eqn:gneq0}) holds. 

Therefore,
 $  g \cdot \left (  \sum_{e \in E^L_G(v_j)} y_e \right )^{t_j - m} $ again has a square-free monomial $y^S$
corresponding to an edge subset $S \subseteq E_G^L(v_j)$.
We let $A_j = A' \cup B \cup S$, and then $A_j$ corresponds to a square-free monomial in the support of $f_j$.

Finally, we check that our edge set $A_j$
satisfies condition (b).
By construction, the only edges of $A_j \setminus A_{j-1}$ 
that do not belong to $E(H)$ are those that belong to $E^L_G(v_j)$, which implies (b). Hence, the induction step is complete, and the claim is proven. As stated at the beginning of the proof, the claim together with the Combinatorial Nullstellensatz implies our theorem.
\end{proof}

\section{Dual polynomials}
\label{sec:poly}

In this section, we take a brief aside to show that our polynomial $f$ used in the proof of Theorem \ref{thm:h} is related to the traditional graph polynomial and that this relationship is a special case of a more general duality relation. As a corollary, we obtain a simplified proof of Theorem \ref{thm:2}.

We consider the following setup. 
Let $A = (a_{ij} : 1 \leq i \leq n, 1 \leq j \leq m)$ be an $n \times m$ matrix, and let $\alpha = (\alpha_1, \dots, \alpha_n)$ and $\beta = (\beta_1, \dots, \beta_m)$ be nonnegative integer vectors.
Let $A^{\alpha, \beta}$ be obtained from $A$ by making exactly $\alpha_i$ copies of each row $i$ in $A$ and then subsequently making exactly $\beta_j$ copies of each column $j$. 
We fix a field $K$ and the ring $K[x_i, y_j: 1 \leq i \leq n, 1 \leq j \leq m]$.
Given vectors $\alpha$ and $\beta$ as above, we define the \emph{dual polynomials}
\begin{eqnarray*}
g &=& \prod_{i = 1}^n \left ( \sum_{j = 1}^m a_{ij} y_j \right )^{\alpha_i}, \\
g^* &=& \prod_{j= 1}^m \left ( \sum_{i = 1}^n a_{ij} x_i \right )^{\beta_j}.
\end{eqnarray*}
Note that when only one polynomial is given, its dual is not uniquely determined, since the correspondence between $g$ and $g^*$ depends on the vectors $\alpha$ and $\beta$.
We say that $g$ and $g^*$ are duals of each other, as the factors of $g$ are obtained from the rows of $A$, while the factors of $g^*$ are obtained from the columns of $A$. This relationship is reminiscent to the relationship between dual linear programs, whose respective constraints are given by the rows and the columns of a common matrix.

We write $x^{\alpha} = \prod_{i = 1}^n x_i^{\alpha_i}$ and $y^{\beta} = \prod_{j = 1}^m y_j^{\beta_j}$. We denote the coefficient of a monomial $y^{\beta}$ in the polynomial $g$ by $\coeff(y^{\beta},g)$. For a vector $u = (u_1, \dots, u_k)$, we write $\|u\|_1 = \sum_{i = 1}^k |u_i|$. Finally, if $M$ is a square matrix, then we denote by $\perm(M)$ its permanent. We have the following result.

\begin{theorem}
\label{thm:duals}
If\/ $\| \alpha \|_1 = \| \beta \|_1$, then 
$( \prod_{j = 1}^m \beta_j ! ) \coeff \left(y^{\beta} ,g \right) =  \left ( \prod_{i = 1}^n \alpha_i ! \right ) \coeff(x^{\alpha},g^*) = \perm(A^{\alpha,\beta}) $. 
\end{theorem}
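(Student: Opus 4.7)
The plan is to prove both equalities by giving an explicit combinatorial expansion of each side and matching the terms. The key observation is that both $\coeff(y^{\beta},g)$ and $\perm(A^{\alpha,\beta})$ are naturally indexed by functions (respectively bijections) between the set of ``row-copies'' $R = \{(i,k) : 1 \le i \le n,\, 1 \le k \le \alpha_i\}$ and the set of ``column-copies'' $C = \{(j,\ell) : 1 \le j \le m,\, 1 \le \ell \le \beta_j\}$, and the hypothesis $\|\alpha\|_1 = \|\beta\|_1$ is exactly what makes $|R| = |C|$ so that $A^{\alpha,\beta}$ is square.

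First I would expand $g$ factor by factor. For each $i$, the distributive law gives
\[
  \Bigl(\sum_{j=1}^{m} a_{ij} y_j\Bigr)^{\alpha_i} \;=\; \sum_{\phi_i : \{1,\dots,\alpha_i\} \to \{1,\dots,m\}} \; \prod_{k=1}^{\alpha_i} a_{i,\phi_i(k)}\, y_{\phi_i(k)}.
\]
Multiplying over $i$ identifies the expansion of $g$ with a sum over all functions $\Phi : R \to \{1,\dots,m\}$, namely $g = \sum_{\Phi} \prod_{(i,k)\in R} a_{i,\Phi(i,k)}\, y_{\Phi(i,k)}$. Collecting the coefficient of $y^{\beta}$ then gives
\[
  \coeff(y^{\beta},g) \;=\; \sum_{\substack{\Phi : R \to \{1,\dots,m\} \\ |\Phi^{-1}(j)| = \beta_j \ \forall j}} \; \prod_{(i,k)\in R} a_{i,\Phi(i,k)}.
\]

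Next I would expand the permanent. By definition,
\[
  \perm(A^{\alpha,\beta}) \;=\; \sum_{\sigma : R \to C \text{ bijection}} \; \prod_{(i,k)\in R} a_{i,\,\pi_1(\sigma(i,k))},
\]
where $\pi_1$ projects onto the first coordinate, since the $((i,k),(j,\ell))$-entry of $A^{\alpha,\beta}$ is $a_{ij}$. The summand depends only on $\Phi := \pi_1 \circ \sigma$, and for a bijection $\sigma$ to exist with $\pi_1 \circ \sigma = \Phi$ one needs $|\Phi^{-1}(j)| = \beta_j$ for every $j$; given such a $\Phi$, the number of compatible bijections $\sigma$ is exactly $\prod_{j=1}^{m}\beta_j!$, obtained by permuting how the $\beta_j$ preimages of $j$ are distributed among the $\beta_j$ copies of column $j$. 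Grouping the sum according to $\Phi$ yields
\[
  \perm(A^{\alpha,\beta}) \;=\; \Bigl(\prod_{j=1}^{m}\beta_j!\Bigr) \sum_{\substack{\Phi : R \to \{1,\dots,m\} \\ |\Phi^{-1}(j)| = \beta_j \ \forall j}} \; \prod_{(i,k)\in R} a_{i,\Phi(i,k)},
\]
which matches the expression for $\coeff(y^{\beta},g)$ above and proves $\bigl(\prod_j \beta_j!\bigr)\coeff(y^{\beta},g) = \perm(A^{\alpha,\beta})$.

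Finally I would obtain the dual identity by symmetry: applying the same argument to $g^* = \prod_j (\sum_i a_{ij} x_i)^{\beta_j}$ with the roles of rows and columns swapped expresses $\bigl(\prod_i \alpha_i!\bigr)\coeff(x^{\alpha},g^*)$ as $\perm((A^\top)^{\beta,\alpha})$, and one observes $(A^\top)^{\beta,\alpha}$ is the transpose of $A^{\alpha,\beta}$ after a relabeling of rows and columns, so their permanents coincide. I do not expect any genuine obstacle here; the only care needed is the bookkeeping that ensures the factor $\prod_j \beta_j!$ arises precisely once, from the freedom to permute column-copies sharing the same original column index.
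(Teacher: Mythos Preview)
Your proof is correct and follows essentially the same approach as the paper: expand $g$ distributively over the $\|\alpha\|_1$ factors, identify the coefficient of $y^{\beta}$ as a sum over selections of one entry per row of $A^{\alpha,\mathbf 1}$ with $\beta_j$ in column $j$, observe that each such selection lifts to exactly $\prod_j \beta_j!$ bijections in $A^{\alpha,\beta}$, and then argue the dual identity by transposition. Your use of the index sets $R$ and $C$ and the projection $\pi_1$ makes the bookkeeping slightly more explicit than the paper's prose, but the argument is the same.
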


\begin{proof}
We first show that $ ( \prod_{j = 1}^m \beta_j !  ) \coeff \left (y^{\beta} ,g \right ) = \perm(A^{\alpha,\beta}) $.
Consider the monomial expansion of $g$ containing $m^{\|\alpha \|_1}$ terms, each of which is obtained by choosing a term from each of the $\| \alpha \|_1$ 
factors $\sum_{j = 1}^m a_{ij} y_j$ of $g$ and taking the product of these terms.
Each of the factors $\sum_{j = 1}^m a_{ij} y_j$ of $g$ can be associated with a distinct row in the matrix $A^{\alpha,\mathbf 1}$ (where $\mathbf 1$ refers to the all-1 vector of length $m$). Therefore, each term $P$ in the monomial expansion of $g$ divisible by $y^{\beta}$ corresponds to a choice of $\|\alpha\|_1$ entries from $A^{\alpha,\mathbf 1}$ with one entry from each row and $\beta_j$ entries in each column $j$,
and
the coefficient of $P$ is equal to the product of all $\|  \alpha   \|_1$ of these entries. 
Hence, $\coeff(y^{\beta}, g)$ is equal to $\sum_{\sigma} a_{\sigma}$, where $\sigma$ runs over all choices of $\|  \alpha   \|_1$ entries from $A^{\alpha,\mathbf 1}$ consisting of one entry from each row and $\beta_j$ entries from each column $j$, and $a_{\sigma}$ is the product of all of the values from $A^{\alpha,\mathbf 1}$ contained in $\sigma$. 
Furthermore, a choice of $\beta_j$ entries from column $j$ of $A^{\alpha,\mathbf 1}$ can be represented in $\beta_j!$
ways as a choice of one entry from each of the $\beta_j $ columns of $A^{\alpha,\beta}$ corresponding to column $j$ of $A^{\alpha,\mathbf 1}$. Therefore, 
$( \prod_{j = 1}^m \beta_j!  )  \coeff(y^{\beta}, g) = \sum_{\tau} a_{\tau}$, where $\tau$ runs over all choices of $\|\alpha\|_1$ entries from $A^{\alpha,\beta}$ consisting of one entry from each row and one entry from each column, and $a_{\tau}$ is the product of all of the values from $A^{\alpha,\beta}$ contained in $\tau$. 
Equivalently, $\left ( \prod_{i=1}^m \beta_j! \right ) \coeff(y^{\beta}, g) = \perm(A^{\alpha,\beta})$.

By using the transpose of $A$ and swapping $\alpha$ and $\beta$, the same argument shows that 
$\left( \prod_{i = 1}^n \alpha_i ! \right) \coeff( x^{\alpha},g^* ) = \perm((A^T)^{\beta,\alpha}) = \perm(A^{\alpha,\beta}) $.
\end{proof}

We note that considering the permanent of a matrix appears naturally when using the polynomial method. The first such appearances may have been used by DeVos et al.\ \cite{DV00, CrDVYe16, Crump17}. It was also used in relation to the problem of total weight choosability (see e.g.~\cite{ZhuDelta1} and \cite{Seamone}, \cite{Zhu15}). In fact, in \cite{Zhu15}, Zhu similarly considers the permanent of a matrix obtained by making copies of the rows or columns of a smaller matrix.

In one special case of this matrix setting, $A$ is the incidence matrix $M$ of a graph $G$, and $\beta \in \{0,1\}^{|E(G)|}$ indexes an edge set $E' \subseteq E(G)$. If $f$ is defined as in Section \ref{sec:suff} and $\coeff(y^{\beta}, f) \neq 0$, then the Combinatorial Nullstellensatz tells us that $G$ has an $F$-avoiding orientation when $|F(v)| \le t_v$ for each vertex $v$ of $G$.
Furthermore, the polynomial dual to $f$ is $f^* = \prod_{uv \in E'} (x_u - x_v)$, 
which is the traditional graph polynomial of $G[E']$.
A famous result of Alon and Tarsi \cite{AT} tells us that if $D$ is an orientation of a graph $H$ satisfying $\deg_D^+(v) = t_v$ at each vertex $v \in V(H)$,
then
\begin{equation}
\tag{AT} \label{eqn:AT}
     \biggl| \coeff\biggl(\,\prod_{v \in V(G)} x_v^{t_v} , f^* \biggr) \biggr| = |EE(D) - EO(D)|.
\end{equation}
Therefore, if there exists an Alon-Tarsi orientation $D$ of $G[E']$ in which $\deg_D^+(v) = t_v$ for each $v \in V(G)$, then (\ref{eqn:AT}) and Theorem \ref{thm:duals} tell us that 
\[ \biggl( \prod_{v \in V(G)} t_v! \biggr)^{-1} \biggl| \coeff \biggl(y^{\beta}, f \biggr) \biggr| = \biggl| \coeff \biggl(\,\prod_{v \in V(G)} x_v^{t_v} , f^* \biggr) \biggr| =   |EE(D) - EO(D)| \neq 0.\]
Hence, the existence of the Alon-Tarsi orientation $D$ on $G[E']$ implies that $G$ has an $F$-avoiding orientation whenever $|F(v)| \le t_v$ for each $v \in V(G)$, proving Theorem \ref{thm:2}.
Theorem \ref{thm:duals} gives us a sufficient condition for the existence of a nowhere-zero $p$-flow, for each prime $p$. In fact, the theorem gives us a stronger conclusion, which we describe below.


Recall that a function $\phi: E(G) \rightarrow \{1, \dots, p-1\}$ on a graph $G$ yields a nowhere-zero $p$-flow if and only if the congruence
$\sum_{e \in E^+(v)} \phi(e) - \sum_{e \in E^-(v)}  \phi(e) \equiv 0 \pmod p$
holds at each vertex $v \in V(G)$. 
Generalizing the notion of a nowhere-zero flow, we say that a graph $G$ is \emph{$\mathbb{Z}_p$-connected} if for every function $b:V(G)\to \mathbb{Z}_p$ with $\sum_{v\in V(G)} b(v)=0$, there exists $\phi: E(G) \to \mathbb{Z}_p\setminus \{0\}$ such that 
\begin{equation}
\label{eq:b-flow}
   \sum_{e \in E^+(v)} \phi(e) - \sum_{e \in E^-(v)} \phi(e) = b(v)
   \end{equation}
for each $v \in V(G)$.
Since $\sum_{v \in V(G)} \left ( \sum_{e \in E^+(v)} \phi(v) - \sum_{e \in E^-(v)} \phi(v) \right ) \equiv 0 \pmod p$,
it follows that if (\ref{eq:b-flow}) is satisfied at every vertex $v \in V(G) \setminus \{u\}$ for some $u \in V(G)$, then
(\ref{eq:b-flow}) also holds at $u$.


\begin{theorem}
Let $p\ge3$ be a prime, and let $G$ be a graph. Let $G^{(p-2)}$ be obtained from $G$ by replacing each edge with $p-2$ parallel edges, and let $H$ be a spanning subgraph of $G^{(p-2)}$ with exactly $(p-1)(|V(G)|-1)$ edges. If, for some $u \in V(H)$, $H$ has an orientation $D$ in which $\deg_D^+(v) = p-1$ for each $v \in V(G) \setminus \{u\}$
and such that $EE(D) - EO(D) \not \equiv 0 \pmod p$, then $G$ is $\mathbb{Z}_p$-connected.
\end{theorem}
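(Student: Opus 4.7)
The plan is to apply the Combinatorial Nullstellensatz (Theorem~\ref{thm:CNSS}) over $\mathbb{F}_p$ to a polynomial whose nonvanishing at a point encodes a valid nowhere-zero $b$-flow. Fix $b : V(G) \to \mathbb{Z}_p$ with $\sum_v b(v) = 0$, fix an arbitrary reference orientation of $G$ with incidence matrix $M = (m_{ve})$, and for each $e \in E(G)$ introduce a variable $y_e$. Writing $L_v(y) = \sum_e m_{ve} y_e$ for the signed imbalance at $v$, I would consider
\[
P(y) \;=\; \prod_{v \in V(G) \setminus \{u\}} \Bigl( 1 - \bigl( L_v(y) - b(v) \bigr)^{p-1} \Bigr) \;\in\; \mathbb{F}_p[y_e : e \in E(G)].
\]
By Fermat's little theorem each factor equals $1$ or $0$ according to whether $L_v(\phi) = b(v)$ or not, so $P(\phi) = 1$ precisely when $\phi$ satisfies the $b$-flow equation at every $v \ne u$ (and therefore automatically at $u$, since $\sum_v b(v) = 0$), and $P(\phi) = 0$ otherwise. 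Hence it suffices to exhibit $\phi \in (\mathbb{F}_p \setminus \{0\})^{E(G)}$ with $P(\phi) \ne 0$.

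To invoke Theorem~\ref{thm:CNSS} with sets $S_e = \mathbb{F}_p \setminus \{0\}$ of size $p-1$, I need a monomial $y^\beta$ with each $\beta_e \le p-2$, total degree $\deg P = (p-1)(|V(G)|-1)$, and nonzero coefficient in $P$. The natural choice is to let $\beta_e$ be the multiplicity of $e$ in $H$, regarded as a subgraph of $G^{(p-2)}$. Then $\beta_e \le p-2$ automatically, and $\sum_e \beta_e = |E(H)| = (p-1)(|V(G)|-1)$, so $y^\beta$ has the maximum possible degree in $P$ and its coefficient in $P$ coincides with its coefficient in the top-degree part
\[
P'(y) \;=\; (-1)^{|V(G)|-1} \prod_{v \ne u} L_v(y)^{p-1}.
\]

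The key step is to evaluate $\coeff(y^\beta, P')$ via the duality of Theorem~\ref{thm:duals}, applied with $A = M$, $\alpha_v = p-1$ for $v \ne u$, and $\alpha_u = 0$, so that $\|\alpha\|_1 = \|\beta\|_1 = (p-1)(|V(G)|-1)$. The dual polynomial is
\[
g^* \;=\; \prod_{e \in E(G)} \Bigl( \sum_v m_{ve} x_v \Bigr)^{\beta_e} \;=\; \prod_e (x_{u_e} - x_{v_e})^{\beta_e},
\]
which is precisely the graph polynomial of $H$ with edges oriented according to the reference orientation of $G$. Theorem~\ref{thm:duals} then gives, up to an overall sign,
\[
\Bigl(\prod_e \beta_e!\Bigr) \coeff(y^\beta, P') \;=\; \bigl((p-1)!\bigr)^{|V(G)|-1}\, \coeff\!\Bigl(\prod_{v \ne u} x_v^{p-1},\, g^*\Bigr).
\]
All factorials in this identity are units in $\mathbb{F}_p$, since $\beta_e \le p-2$ and, by Wilson's theorem, $(p-1)! \equiv -1 \pmod p$. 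Finally, the Alon--Tarsi identity~\eqref{eqn:AT} applied to the hypothesized orientation $D$ of $H$ (whose out-degree sequence is exactly $p-1$ at every $v \ne u$ and $0$ at $u$) shows that $|\coeff(\prod_{v \ne u} x_v^{p-1}, g^*)| = |EE(D) - EO(D)|$, which is nonzero modulo $p$ by hypothesis. Thus $\coeff(y^\beta, P) \not\equiv 0 \pmod p$, and Theorem~\ref{thm:CNSS} produces the desired flow.

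The main delicate point I anticipate is the alignment of orientations: the dual polynomial $g^*$ realizes $H$ with the orientation inherited from the reference orientation of $G$, whereas the count $EE(D) - EO(D)$ is computed with respect to the hypothesized orientation $D$. Reversing any edge of $H$ only multiplies its graph polynomial by $-1$, so each monomial coefficient merely changes sign and in particular its vanishing modulo $p$ is unaffected; hence Alon--Tarsi applied to $D$ correctly governs the coefficient arising from the duality. The remaining bookkeeping concern is ensuring that every factorial appearing in the duality formula is invertible modulo $p$, which is exactly why the statement restricts $H$ to a subgraph of $G^{(p-2)}$ (capping each $\beta_e$ at $p-2$) and uses out-degrees equal to $p-1$ (so $\alpha_v! = (p-1)!$ is a unit).
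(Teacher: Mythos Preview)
Your proof is correct and takes essentially the same approach as the paper: your polynomial $P$ differs from the paper's $g_0=\prod_{v\ne u}\prod_{a\ne b(v)}(L_v-a)$ only by the overall sign $(-1)^{|V(G)|-1}$, via the identity $\prod_{a\ne c}(x-a)=(x-c)^{p-1}-1$ in $\mathbb{F}_p[x]$, and both proofs then proceed identically through Theorem~\ref{thm:duals} and the Alon--Tarsi identity. If anything, you are more explicit than the paper about why the factorials in the duality formula are units modulo $p$ and about the harmless sign discrepancy between the reference orientation and $D$.
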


\begin{proof}
Consider an arbitrary orientation of the edges of $G$.
From this orientation, define the incidence matrix $M = (m_{ve}: v \in V(G), e \in E(G))$ of $G$ and consider any $b:V(G)\to \mathbb{Z}_p$ with $\sum_{v\in V(G)} b(v)=0$.  We fix a vertex $u\in V(G)$, the field $\mathbb Z_p$, and a ring $\mathbb Z_p[x_v, y_e: v \in V(G), e \in E(G)]$ in which we define the polynomial
\[
    g_0 = \prod_{v \in V(G) \setminus \{u\}} \ \prod_{a \in \mathbb{Z}_p\setminus\{b(v)\}} \left ( \sum_{e \in E(G)} m_{ve} y_e - a \right ) .
\]
It is straightforward to show that there is a function $\phi: E(G) \to \mathbb{Z}_p\setminus \{0\}$ satisfying (\ref{eq:b-flow}) if and only if there is an assignment from the set $\mathbb{Z}_p\setminus \{0\}$ to each variable $y_e$ that gives $g_0$ a nonzero evaluation.
Furthermore, every term of maximum degree in the expansion of $g_0$ is also a maximum-degree term in the expansion of 
\[
   g = \prod_{v \in V(G) \setminus \{u\}} \left(\sum_{e \in E(G)} m_{ve} y_e \right)^{p-1} = \prod_{v \in V(G)} \left( \sum_{e \in E(G)} m_{ve} y_e  \right)^{\alpha_v}, 
\]
where $\alpha_v = p-1$ for each $v \in V(G) \setminus \{u\}$, and $\alpha_u = 0$.
We let $\beta \in \{0,\dots,p-2\}^{|E(G)|}$ be a vector indexing which edges of $G$, and how many copies of each edge, belong to $H$.
Since the maximum degree terms in $g$ and $g_0$ are equal, it follows that if $\coeff(y^{\beta}, g) \neq 0$, then
by the Combinatorial Nullstellensatz, $g_0$ has an assignment for each $y_e$ from $\mathbb{Z}_p\setminus \{0\}$ 
that makes $g_0$ nonzero.

Note that $\sum_{v\in V(G)} \alpha_v = (p-1)(|V(G)|-1) = |E(H)| = \sum_{e\in E(G)} \beta_e$. Thus we can apply
Theorem \ref{thm:duals}, which shows that $\coeff(y^{\beta}, g) \neq 0$ if and only if $\coeff(x^{\alpha},g^*) \neq 0$, where
\[ g^* = \prod_{e \in E(G)} \left ( \sum_{v \in V(G)} m_{ve} x_v  \right )^{\beta_e} = \prod_{e \in E(H)} \left ( \sum_{v \in V(G)} m_{ve} x_v  \right ). \]
However, since $g^*$ is the graph polynomial of $H$, and since $D$ is an orientation of $H$ satisfying $\deg_D^+(v) = \alpha_v$ at each $v \in V(H)$ and $EE(D) - EO(D) \neq 0$ in $\mathbb Z_p$, it follows from (\ref{eqn:AT}) that $|\coeff(x^{\alpha},g^*)| =  | EE(D) - EO(D)| \neq 0$. Therefore, an assignment from $\mathbb{Z}_p\setminus\{0\}$ to each $y_e$  giving $g_0$ a nonzero evaluation exists.  This implies that $G$ is $\mathbb{Z}_p$-connected.  
\end{proof}

\section{A step toward Conjectures \ref{conj:1} and \ref{conj:2}}
\label{sec:int_rounding}

In this section, we prove Theorem \ref{thm:two_thirds}, which establishes relaxations of Conjectures \ref{conj:1} and \ref{conj:2} with slightly smaller constant factors. 
Our main tool is Theorem \ref{thm:h} along with a fractional rounding method.

Many graph theory problems can be described as integer programming problems. Often, we find a fractional solution, and then use rounding to obtain an integer solution that approximately solves the original problem. The following lemma shows how this is done.

\begin{lemma}\label{lem:Integer_Rounding}
Given a graph $G = (V,E)$, let $M=(m_{ve}:v\in V, e\in E)$ be a real-valued matrix in which $m_{ve} \neq 0$ only if $v\in e$. 
Let $\mathbf{y}\in [0, 1]^E$ be a vector, and let $\mathbf{x}=M\mathbf{y}$. Then, there exists a $01$-vector $\mathbf{y}'\in \{0, 1\}^E$ such that  $\mathbf{x}'=M\mathbf{y}'$ satisfies $\mathbf{x}'_v \geq  \mathbf{x}_v-b_v$ 
for each $v\in V(G)$,  where $b_v=\max\{|m_{ve}|:e\in E\}$. 
Furthermore, we may choose $\mathbf{y}'$ so that $\mathbf{x}'_v > \mathbf{x}_v-b_v$ whenever $b_v > 0$.
\end{lemma}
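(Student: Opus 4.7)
I plan to prove this by induction on $|E|$, with a case analysis at each step. The base case $|E|=0$ is trivial, since we take $\mathbf{y}' = \mathbf{y}$. For the inductive step, the easy situation is when some coordinate $y_e \in \{0,1\}$: I apply the inductive hypothesis to $G-e$ with $\mathbf{y}$ restricted to $E\setminus\{e\}$ to obtain an integer vector $\mathbf{y}''$, and extend by setting $y'_e = y_e$. Since the updated bound $b^{\mathrm{new}}_w = \max\{|m_{we'}| : e' \neq e\}$ is at most $b_w$, a direct computation shows $(M\mathbf{y}')_w \geq (M\mathbf{y})_w - b^{\mathrm{new}}_w \geq (M\mathbf{y})_w - b_w$.

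Assume from now on that every $y_e \in (0,1)$. If $G$ contains a cycle $C$, I produce an integer coordinate by a shift. Let $A$ be the $|V(C)| \times |C|$ submatrix of $M$ restricted to cycle vertices and cycle edges. If $A$ is nonsingular, set $\mathbf{d}_C = A^{-1}\mathbf{1}$; otherwise, take any nonzero $\mathbf{d}_C \in \ker A$. Extending $\mathbf{d}_C$ by zeros to $\mathbf{d} \in \mathbb{R}^E$, we have $M\mathbf{d} \geq \mathbf{0}$ componentwise. Shifting $\mathbf{y} \to \tilde{\mathbf{y}} = \mathbf{y} + \epsilon \mathbf{d}$ for the maximal $\epsilon > 0$ keeping $\tilde{\mathbf{y}} \in [0,1]^E$ produces at least one integer coordinate and satisfies $M\tilde{\mathbf{y}} \geq M\mathbf{y}$. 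Applying the easy case to $\tilde{\mathbf{y}}$ then gives an integer $\mathbf{y}'$ with $M\mathbf{y}' \geq M\tilde{\mathbf{y}} - \mathbf{b} \geq M\mathbf{y} - \mathbf{b}$.

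If $G$ is a forest (and all $y_e$ are fractional), I pick any leaf $v$ of $G$ with unique incident edge $e = uv$ and round $y_e$ according to the sign of $m_{ue}$: set $y'_e = 1$ if $m_{ue} > 0$ and $y'_e = 0$ otherwise. Apply induction to $G-e$ with $\mathbf{y}$ restricted to $E\setminus\{e\}$, and extend by $y'_e$. The chosen rounding guarantees $m_{ue}(y'_e - y_e) \geq 0$, which combines with the inductive bound on $G-e$ to yield the desired inequality at $u$; at $v$ it holds automatically, since $b^{\mathrm{new}}_v = 0$ and $|m_{ve}(y'_e - y_e)| \leq b_v$; at any other vertex $w$ we have $m_{we} = 0$, so the argument is immediate.

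The step I expect to be the main obstacle is the cycle case, where one must justify the existence of a shift direction $\mathbf{d}$ with $M\mathbf{d}\geq \mathbf{0}$ whose support lies in $C$; this is precisely what the dichotomy on whether $A$ is singular resolves. For the strict inequality when $b_v > 0$, the point is that $y_e \in (0,1)$ implies $|y'_e - y_e| < 1$, so every change in $M\mathbf{y}$ at $v$ during the induction has magnitude strictly less than $b_v$, and this strictness propagates through each step.
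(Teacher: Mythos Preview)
Your proof is correct and follows essentially the same approach as the paper. Both arguments use the same two ingredients: the cycle-cancellation step (including the identical dichotomy on whether the square cycle submatrix is singular, in which case one takes a kernel vector, or nonsingular, in which case one solves $A\mathbf{d}_C=\mathbf{1}$, respectively $A\mathbf{d}_C=e_u$ in the paper) and the leaf-rounding step. The only difference is cosmetic: you package the argument as an induction on $|E|$, while the paper first passes to a vector $\mathbf{z}$ minimizing the number of fractional coordinates subject to $M\mathbf{z}\ge\mathbf{x}$, shows its fractional support is a forest, and then rounds leaves iteratively.

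One remark on your final paragraph: your justification of the strict inequality is a bit loose. The sentence ``every change in $M\mathbf{y}$ at $v$ during the induction has magnitude strictly less than $b_v$'' is not the right statement, since many edges may contribute at $v$ and their cumulative effect is what matters. What actually makes the strict inequality go through in your induction is the case split at each step: if $b^{\mathrm{new}}_v>0$ then the strictness comes directly from the inductive hypothesis on $G-e$, and if $b^{\mathrm{new}}_v=0$ while $b_v>0$ then $(M\mathbf{y}')_v\ge(M\mathbf{y})_v>(M\mathbf{y})_v-b_v$ trivially. This is routine to verify, so the proof stands.
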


\begin{proof}
Given $\mathbf{t}\in [0,1]^E$, we write
$fr(\mathbf{t})=\{e: 0<\mathbf{t}_e<1\}$. Let $\mathbf{z}\in [0,1]^E$ be a solution of $M\mathbf{z}\ge \mathbf{x}$ with minimum size of $fr(\mathbf{z})$. We know that $\mathbf{z}$ exists, because $\mathbf z = \mathbf y$ satisfies the inequality. 

First we show that the subgraph $H$ of $G$ induced by the edges in $fr(\mathbf{z})$ is acyclic. If $H$ contains a cycle $C$, then we claim that there exists a non-trivial vector $\mathbf{\alpha}\in \mathbb{R}^E$ with its support contained in $E(C)$  such that $M\mathbf{\alpha}\ge 0$.
Indeed, let $M'$ be obtained from $M$ by taking the columns corresponding to $E(C)$ and rows corresponding to $V(C)$. 
As $C$ is a cycle, $M'$ is a square matrix. Since $m_{ve} = 0$ whenever $v \not \in e$, finding our vector $\mathbf{\alpha}$
with support contained in $E(C)$ is equivalent to finding a nontrivial vector $\mathbf{ \alpha }'$ satisfying $M'\mathbf{\alpha}' \geq 0$. 
If $M'$ is singular, then $\mathbf{\alpha}'$ may be taken from the nullspace of $M'$. On the other hand, if $M'$ is invertible, then $\mathbf{\alpha}'$ may be taken as the unique solution to $M' \mathbf{\alpha}' = e_u$, where $e_u$ is the unit vector corresponding to some vertex $u \in V(C)$. In both cases, we find our vector $\mathbf{\alpha}'$ and hence also our vector $\mathbf{\alpha}$.  

Now, we can choose a value $p>0$ such that $\mathbf{z}+p\mathbf{\alpha}\in [0,1]^E$ and
$|fr(\mathbf{z}+p\mathbf{\alpha})|<|fr(\mathbf{z})|$. 
Then, we have $M(\mathbf{z}+p\mathbf{\alpha})=M\mathbf{z}+pM\mathbf{\alpha}\ge M\mathbf{z}$, which contradicts our choice of $\mathbf{z}$.
Thus, we conclude that the subgraph $H$ of $G$ induced by $fr(\mathbf z)$ is acyclic.

Now, if $H$ is non-empty, then $H$ has a leaf $v$ incident to a single edge $e=vu \in E(H)$. If we change the value of $\mathbf{z}_e$, only the coordinates $v$ and $u$ of $M\mathbf z$ change. We can change $\mathbf z_e$ to $0$ or $1$ so that the $u$-coordinate of $M \mathbf z$ increases, while the $v$-coordinate decreases by less than $b_v$. 
We consider $H-e$ and repeat this process until $fr(\mathbf z)$ is empty,
and we write $\mathbf{y}'$ for the new integer vector we obtain from $\mathbf z$.
We write $\mathbf{x}'=M\mathbf{y}'$, and by our observation above, $\mathbf{x}'_v \geq  \mathbf{x}_v-b_v$ for each $v\in V(G)$, with a strict inequality whenever $b_v > 0$.
\end{proof}

As a warm-up for our proof of Theorem \ref{thm:two_thirds}, we first prove an easier result, which is an approximation of Conjecture \ref{conj:1}. By choosing an orientation of $G$ that makes the in-degree and out-degree as equal as possible at each vertex,
Theorem \ref{thm:two_thirds} implies the following result with only a constant error.

\begin{theorem}
\label{thm:third}
Let $G$ be a graph, and let $F:V(G) \rightarrow 2^{\mathbb N}$. If, for each $v\in V(G)$ we have,
\[ |F(v)|\le \frac{1}{3}\deg(v)-1,\]
then $G$ has an $F$-avoiding orientation.
\end{theorem}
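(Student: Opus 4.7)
My plan is to apply Theorem \ref{thm:h} using a spanning subgraph $H$ obtained via the rounding procedure of Lemma \ref{lem:Integer_Rounding}. The key observation is that the quantity $\deg_G^L(v) - 2\deg_H^L(v) + \deg_H^R(v)$ appearing in Theorem \ref{thm:h} equals exactly $\tfrac{1}{3}\deg_G(v)$ when $H$ is taken to be the ``fractional subgraph'' in which every edge has weight $\tfrac{1}{3}$; rounding this fractional subgraph carefully to an integral one will produce the $H$ we need.

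To carry this out, I will first fix any ordering $v_1, \ldots, v_n$ of $V(G)$ (the specific choice will not matter), and define the matrix $N$ with entries $n_{ve} = -2$ if $e \in E_G^L(v)$, $n_{ve} = 1$ if $e \in E_G^R(v)$, and $n_{ve} = 0$ otherwise. Setting $\mathbf{y} = (\tfrac{1}{3}, \ldots, \tfrac{1}{3}) \in [0,1]^{E(G)}$, one directly verifies that $\deg_G^L(v) + (N\mathbf{y})_v = \tfrac{1}{3}\deg_G(v)$ for every $v$. Next I will invoke Lemma \ref{lem:Integer_Rounding} to round $\mathbf{y}$ to some $\mathbf{y}' \in \{0,1\}^{E(G)}$, and take $H$ to be the spanning subgraph indicated by $\mathbf{y}'$. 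Since $|n_{ve}| \le 2$, the lemma yields $b_v \le 2$, and its strict inequality combined with the integrality of the resulting quantity will give
\[\deg_G^L(v) - 2\deg_H^L(v) + \deg_H^R(v) \ge \left\lfloor \frac{\deg_G(v)}{3} \right\rfloor - 1 \ge |F(v)|\]
for every $v$, with easy separate verification in the degenerate cases $b_v \le 1$ (isolated vertices or vertices whose neighbors all come later in the ordering). Theorem \ref{thm:h} then produces the desired $F$-avoiding orientation.

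The main delicate point will be that the naive rounding can lose as much as $b_v = 2$ in the quantity, leaving only the bound $\tfrac{1}{3}\deg_G(v) - 2$. Recovering the tight bound $\lfloor \tfrac{1}{3}\deg_G(v) \rfloor - 1$ depends crucially on using both the strict inequality in Lemma \ref{lem:Integer_Rounding} (active whenever $b_v > 0$) and the integrality of the rounded quantity; together these two features sharpen $\tfrac{1}{3}\deg_G(v) - 2$ up by exactly the single unit needed to match the hypothesis on $|F(v)|$. An equivalent orientation-based approach, of attempting to invoke Theorem \ref{thm:two_thirds} with a balanced orientation $D$ satisfying $\deg_D^+(v) \ge \lfloor \deg_G(v)/2 \rfloor$, runs into a parity gap precisely at vertices with $\deg_G(v) \equiv 3 \pmod 6$, and handling that gap requires essentially the same rounding argument above.
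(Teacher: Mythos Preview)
Your proposal is correct and follows essentially the same argument as the paper: fix an arbitrary ordering, define the matrix with entries $1$ on $E_G^R(v)$ and $-2$ on $E_G^L(v)$, start from the fractional vector $\mathbf{y}=(\tfrac13,\dots,\tfrac13)$, apply Lemma~\ref{lem:Integer_Rounding}, and combine the strict inequality with integrality to recover the $\lfloor \tfrac{1}{3}\deg(v)\rfloor-1$ bound needed for Theorem~\ref{thm:h}. Your separate treatment of the cases $b_v\le 1$ is harmless but unnecessary, since the inequality $\mathbf{x}'_v>\tfrac{1}{3}\deg(v)-\deg_G^L(v)-2$ already holds in those cases as well.
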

\begin{proof}

By Theorem~\ref{thm:h}, we just need to show that there exist an ordering of vertices $v_1,\dots, v_n$, and a subgraph $H$ of $G$ such that for each vertex $v\in V(G)$, it holds that 
\[\deg^L_G(v) - 2 \deg^L_H(v) +\deg^R_H(v)\ge \lfloor \tfrac{1}{3}\deg(v)\rfloor-1.\]
We fix an ordering $v_1, \dots, v_n$ of the vertices of $G$. Also, we define a matrix $M=(m_{ve}: v\in V(G), e\in E(G))$ as follows. For each edge $e= v_iv_j$ with $i<j$, we let the column of $M$ corresponding to $e$ have entries $1$ and $-2$ in the rows corresponding to $v_i$ and $v_j$, respectively, and we let all other entries in the column be $0$.
Our task is equivalent to finding a
$01$-vector $\mathbf{y}'\in \{0,1\}^E$ such that the vector $\mathbf{x}' = M\mathbf{y}'$ satisfies $\mathbf{x}'_v \geq \lfloor\frac{1}{3}\deg(v)\rfloor-\deg^L_G(v)-1$. We begin with a vector $\mathbf y \in \mathbb R^E$ with $1/3$ in each of its entries, and we observe that $\mathbf x = M \mathbf y$ satisfies 
\[
 \mathbf{x}_v = - \frac{2}{3} \deg^L_G(v) + \frac{1}{3}\deg^R_G(v) = \frac{1}{3}\deg_G(v) - \deg_G^L(v)
\]
for each $v \in V(G)$. Hence, by Lemma \ref{lem:Integer_Rounding}, there exists $\mathbf{y}'\in \{0,1\}^E$ such that the vector $\mathbf{x}' = M\mathbf{y}'$ satisfies 
\[\mathbf{x}'_v > \frac{1}{3}\deg(v)-\deg^L_G(v)-2
\]
at each vertex $v \in V(G)$. Equivalently, $\mathbf{x}'_v \geq \lfloor\frac{1}{3}\deg(v)\rfloor-\deg^L_G(v)-1$ at each $v$, completing the proof.
\end{proof}

Using a similar method, we also prove Theorem \ref{thm:two_thirds}, which gives a  $\frac{2}{3}$-approximation of Conjecture~\ref{conj:2}.


\begin{proof}[Proof of Theorem \ref{thm:two_thirds}]
Again, by Theorem~\ref{thm:h}, we just need to show that there exists an ordering $v_1,\dots, v_n$ of $V(G)$, as well as a subgraph $H$ of $G$ such that for each vertex $v\in V(G)$, it holds that 
\[\deg^L_G(v) - 2 \deg^L_H(v) +\deg^R_H(v)\ge \lfloor \tfrac{2}{3}\deg^+_D(v)\rfloor-1.\]
We fix an ordering $v_1, \dots, v_n$ of $V(G)$, and then define a matrix $M$ as in the proof of Theorem \ref{thm:third}.
To find a subgraph $H$ satisfying the above inequality 
for each $v\in V(G)$, it is equivalent to find an integer vector $\mathbf{y}'\in \{0,1\}^E$ such that the vector $\mathbf{x}' = M\mathbf{y}'$ satisfies
$\mathbf{x}'_v \ge \lfloor\frac{2}{3}\deg^+_D(v)\rfloor-\deg^L_G(v)-1$ for each $v$.

We say that an edge $v_iv_j$ with $i<j$ is a \emph{forward edge} in $D$ if it is oriented from $v_i$ to $v_j$; otherwise, we call it a
\emph{backward edge} in $D$. 
We write $\deg^{L+}(v)$ for the number of backward edges outgoing from $v$ in $D$, and we write $\deg^{L-}(v)$ for the number of forward edges incoming to $v$ in $D$.
Similarly, we write $\deg^{R+}(v)$ for the number of forward edges outgoing from $v$ in $D$, and $\deg^{R-}(v)$ for the number of backward edges incoming to $v$ in $D$.

Let $v_1, \dots, v_n$ be an ordering of $V(G)$ with the minimum possible 
number of forward edges.
We argue that for each $v \in V(G)$, it holds that
$\deg^{L+}(v)\ge \deg^{L-}(v)$. 
Indeed, if this inequality does not hold for $v$, then we can move $v$ to the first position in our vertex ordering and decrease the number of forward edges in $D$ by $\deg^{L-}(v)-\deg^{L+}(v)> 0$, a contradiction.

Now, let $\mathbf{y}_e=\frac{2}{3}$ for each forward edge $e$ in $D$,
and let $\mathbf{y}_e=0$ for each backward edge $e$ in $D$.
Letting $\mathbf{x}=M\mathbf{y}$, we have
\begin{align*}
    \mathbf{x}_v+\deg^L_G(v) & -\frac{2}{3}\deg^+_D(v)\\
    &=\, \frac{2}{3}(\deg^{R+}(v)-2\deg^{L-}(v)) + (\deg^{L-}(v)+\deg^{L+}(v)) - \frac{2}{3}(\deg^{L+}(v)+\deg^{R+}(v))\\
    &=\, \frac{1}{3}(\deg^{L+}(v)-\deg^{L-}(v))\ge 0.
\end{align*}
Therefore, for each $v \in V(G)$, we have $\mathbf{x}_v \ge \frac{2}{3}\deg^+_D(v)-\deg^L_G(v)$. 
Since $|m_{ve}|\le 2$ for each entry $m_{ev}$ in $M$, it follows from 
Lemma~\ref{lem:Integer_Rounding}
that there exists an integer vector $\mathbf{y}' \in \{0,1\}^E$ such that the vector
$\mathbf{x}' = M\mathbf{y}'$ satisfies
$\mathbf{x}_v' > \frac{2}{3}\deg^+_D(v)-\deg^L_G(v)-2$
for each $v \in V(G)$.
Equivalently, for each vertex $v$, we have $\mathbf{x_v}'\ge \lfloor\frac{2}{3}\deg^+_D(v)\rfloor-\deg^L_G(v)-1$, 
completing the proof.
\end{proof}

\section{Graphs with a relaxed regularity condition}
\label{sec:reg}

In this section, we prove Theorem \ref{thm:reg_intro}, which holds for large graphs whose maximum degree is subexponential in terms of their minimum degree. 
For our proof, we need some preliminary lemmas and definitions. We use the Chernoff bound, a probabilistic tool that can be found in many textbooks (e.g.~\cite[Chapter 4]{mitzenmacher2017probability}).

\begin{lemma}
\label{lem:chernoff}
Let $X$ be a binomially distributed variable with parameters $n$ and $p$. Let $\mu = np$, and let $0 < \delta \leq 1$. Then,
$$\Pr(X > (1 + \delta) \mu) \leq \exp \left( - \frac{1 }{3 } \delta^2 \mu  \right)$$
and
$$\Pr(X< (1 - \delta) \mu) \leq \exp \left( - \frac{1}{3} \delta^2 \mu \right).$$
\end{lemma}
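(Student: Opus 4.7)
The plan is to prove both tails via the standard moment-generating-function (exponential Markov) argument, then simplify the resulting expression to the stated clean form. Writing $X = \sum_{i=1}^n X_i$ with $X_i$ independent Bernoulli$(p)$, the key identity is $\mathbb{E}[e^{tX}] = (1 - p + p e^t)^n$ for any $t \in \mathbb{R}$, and the key inequality is $1 - p + p e^t \leq \exp(p(e^t - 1))$, which follows from $1 + x \leq e^x$. Combining these gives $\mathbb{E}[e^{tX}] \leq \exp(\mu(e^t - 1))$.

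For the upper tail, I would apply Markov's inequality to $e^{tX}$ with $t > 0$:
\[
  \Pr(X > (1+\delta)\mu) = \Pr(e^{tX} > e^{t(1+\delta)\mu}) \leq \exp\bigl(\mu(e^t - 1) - t(1+\delta)\mu\bigr).
\]
Optimizing in $t$ yields $t = \ln(1+\delta)$, producing the classical bound $\bigl(e^\delta/(1+\delta)^{1+\delta}\bigr)^\mu$. The remaining task is to show $e^\delta/(1+\delta)^{1+\delta} \leq \exp(-\delta^2/3)$ for $0 < \delta \leq 1$, i.e.\ that $\varphi(\delta) := \delta - (1+\delta)\ln(1+\delta) + \delta^2/3 \leq 0$ on $(0,1]$. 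This is a routine one-variable calculus check: $\varphi(0) = 0$ and $\varphi'(\delta) = -\ln(1+\delta) + 2\delta/3 \leq 0$ on $[0,1]$, since the Taylor expansion of $\ln(1+\delta)$ gives $\ln(1+\delta) \geq \delta - \delta^2/2 \geq 2\delta/3$ for $\delta \leq 1$ (or one can verify the single inequality $\ln(1+\delta) \geq 2\delta/3$ at $\delta=1$ and compare derivatives).

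For the lower tail, the same scheme works with $t < 0$: Markov on $e^{tX}$ with $t = \ln(1-\delta) < 0$ gives $\Pr(X < (1-\delta)\mu) \leq \bigl(e^{-\delta}/(1-\delta)^{1-\delta}\bigr)^\mu$. The analogous calculus lemma, $-\delta - (1-\delta)\ln(1-\delta) + \delta^2/3 \leq 0$ for $0 < \delta \leq 1$, in fact yields the sharper $\delta^2/2$ exponent; replacing $1/2$ by $1/3$ is trivially weaker and matches the statement.

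The main (and only) obstacle is really the scalar inequality in the last step; once that is verified, both tail bounds fall out symmetrically. Since this is a textbook result, the proof I sketch is standard and could alternatively be replaced by a citation, for instance to Mitzenmacher and Upfal~\cite{mitzenmacher2017probability}, which the excerpt already invokes.
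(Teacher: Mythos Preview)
The paper does not prove this lemma at all; it simply states it as a standard tool and cites \cite[Chapter 4]{mitzenmacher2017probability}. Your proposal therefore goes further than the paper, supplying the standard moment-generating-function derivation that the paper omits. What you sketch is correct and is exactly the textbook argument, so either your proof or the citation (which you yourself suggest at the end) would serve.

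One small wrinkle worth tightening: in the upper-tail step you justify $\ln(1+\delta) \geq 2\delta/3$ via $\ln(1+\delta) \geq \delta - \delta^2/2 \geq 2\delta/3$, but the second inequality only holds for $\delta \leq 2/3$, not on all of $(0,1]$. Your parenthetical alternative is the right fix: since $\varphi'(\delta) = -\ln(1+\delta) + 2\delta/3$ has $\varphi''(\delta) = -\tfrac{1}{1+\delta} + \tfrac{2}{3}$ changing sign once (negative then positive) on $[0,1]$, $\varphi'$ attains its maximum at an endpoint, and $\varphi'(0)=0$, $\varphi'(1)=\tfrac{2}{3}-\ln 2<0$, so $\varphi' \leq 0$ throughout. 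With that adjustment the argument is clean.
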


Mitzenmacher and Upfal \cite{mitzenmacher2017probability} point out that for the first statement of the lemma, it is enough to let $\mu \leq np$, and for the second statement of the lemma, it is enough to let $\mu \geq np$.
We use the following corollary of the Chernoff bound.

\begin{lemma}
\label{lem:simple_chernoff}
Let $X$ be a binomially distributed variable with parameters $n$ and $p$, and let $0 < \gamma < 1$. Then,
\[ \Pr(|X - pn| \geq \gamma n) \leq 2 \exp \left ( - \frac{1}{12} \gamma^2 n \right ). \]
\end{lemma}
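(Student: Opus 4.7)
My plan is to deduce the two-sided tail bound directly from Lemma \ref{lem:chernoff} by picking $\delta$ appropriately, splitting into cases according to whether $\gamma \leq p$ or $\gamma > p$.

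In the easy regime $\gamma \leq p$, I would set $\mu = np$ and $\delta = \gamma/p \in (0,1]$, so that $(1 \pm \delta)\mu = np \pm \gamma n$. Then each of the two tails in Lemma \ref{lem:chernoff} is bounded by
$$\exp\bigl(-\tfrac{1}{3}\delta^2\mu\bigr) = \exp\bigl(-\tfrac{\gamma^2 n}{3p}\bigr) \leq \exp\bigl(-\tfrac{\gamma^2 n}{3}\bigr),$$
where the last step uses $p \leq 1$. Summing the two contributions and weakening the exponent yields the desired $2\exp(-\gamma^2 n/12)$.

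In the remaining regime $\gamma > p$, the lower-tail event $\{X < np - \gamma n\}$ is empty because $np - \gamma n < 0 \leq X$, so it contributes nothing. For the upper tail, the naive choice $\delta = \gamma/p$ would violate $\delta \leq 1$, so instead I would invoke the relaxed form of Lemma \ref{lem:chernoff} noted right after its statement, in which the parameter may be taken to be any upper bound on the mean. Setting $\mu = (p+\gamma)n/2$ (which satisfies $\mu \geq np$ precisely because $\gamma > p$) and $\delta = 1$, one has $(1+\delta)\mu = (p+\gamma)n$, and therefore
$$\Pr\bigl(X \geq (p+\gamma)n\bigr) \leq \exp\bigl(-\tfrac{\mu}{3}\bigr) = \exp\bigl(-\tfrac{(p+\gamma)n}{6}\bigr) \leq \exp\bigl(-\tfrac{\gamma n}{6}\bigr) \leq \exp\bigl(-\tfrac{\gamma^2 n}{12}\bigr),$$
where the last inequality uses $\gamma \leq 1$ so that $\gamma n/6 \geq \gamma^2 n/12$. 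As an alternative, one can instead pass to $Y = n - X$, which is binomial with parameter $1-p$, and apply the lower-tail half of Lemma \ref{lem:chernoff} directly.

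The only mild obstacle is this second case: Lemma \ref{lem:chernoff} as stated restricts $\delta$ to $(0,1]$, while the honest choice $\delta = \gamma/p$ here exceeds $1$. The fix is either the relaxed-mean version of Chernoff or the symmetry argument above; once that is in place, everything else is routine bookkeeping to absorb constants into the uniform exponent $\gamma^2 n/12$.
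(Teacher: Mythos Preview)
Your proof is correct and follows essentially the same two-case strategy as the paper: compare $p$ against $\gamma$, apply Lemma~\ref{lem:chernoff} directly with $\mu=np$ in the large-$p$ regime, and in the small-$p$ regime discard the empty lower tail and invoke the relaxed-mean version of the upper-tail bound. The only cosmetic differences are your case boundary ($\gamma\le p$ versus the paper's $p\ge\gamma/2$, so $\delta=\gamma/p$ rather than $\gamma/(2p)$) and your choice $\mu=(p+\gamma)n/2$ in the second case, which lands exactly on the target threshold $pn+\gamma n$; the paper instead takes $\mu=\gamma n$.
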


\begin{proof}
First, suppose that $p \geq \frac{1}{2} \gamma$. Since $X$ is a binomially distributed variable with mean of $np$, we can apply Lemma \ref{lem:chernoff} with $\mu = np$ and $\delta = \gamma / 2 p \leq 1$.
Then, by Lemma \ref{lem:chernoff}, our inequality fails with probability at most 
$2 \exp \left (-\frac{1}{12} \frac{\gamma^2}{p} n \right ) \leq 2 \exp \left ( - \frac{1}{12} \gamma^2 n \right )$.

Next, suppose that $p < \frac{1}{2}\gamma$. In this case, we only need to prove the upper bound of the inequality. Since $p n < \gamma n$, we can apply Lemma \ref{lem:chernoff} with $\mu = \gamma n$ and $\delta = 1$ to show that the upper bound fails with probability at most $\exp (-\frac{1}{3} \gamma n) < 2 \exp \left ( - \frac{1}{12} \gamma^2 n \right )$.
\end{proof}



We also use the following well-known symmetric form of the Lov\'asz Local Lemma.

\begin{lemma}[\cite{shearer1985problem}]
\label{lem:LLL_symmetric}
Let $\mathcal A$ be a collection of (bad) events in a probability space. Suppose that each bad event in $\mathcal A$ occurs with probability at most $p$ and is independent with all but fewer than $D$ other bad events in $\mathcal A$. If 
\[Dp < 1/e,\]
then with positive probability, no bad event in $\mathcal A$ occurs.
\end{lemma}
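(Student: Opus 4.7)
The plan is to reduce the symmetric statement to the general (asymmetric) Lovász Local Lemma and then optimize a one-parameter family of weights. Concretely, I will first prove the following asymmetric version: if one can assign reals $x_A \in [0,1)$ to each $A \in \mathcal{A}$ so that $\Pr(A) \le x_A \prod_{B \in N(A)} (1 - x_B)$, where $N(A)$ denotes the set of other events on which $A$ depends, then
\[\Pr\biggl(\,\bigcap_{A \in \mathcal A} \overline{A}\biggr) \ge \prod_{A \in \mathcal A} (1 - x_A) > 0.\]
In particular, the intersection is nonempty.

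To prove the asymmetric version I will establish, by induction on $|S|$, the auxiliary inequality
\[\Pr\biggl(A \,\Big|\, \bigcap_{B \in S} \overline{B}\biggr) \le x_A \quad \text{for every } A \in \mathcal A \text{ and every finite } S \subseteq \mathcal A \setminus \{A\}.\]
The base case $S = \emptyset$ is the hypothesis $\Pr(A) \le x_A \prod_{B \in N(A)}(1 - x_B) \le x_A$. For the inductive step I partition $S = S_1 \cup S_2$, where $S_1 = S \cap N(A)$ and $S_2 = S \setminus N(A)$, and write
\[\Pr\biggl(A \,\Big|\, \bigcap_{B \in S} \overline{B}\biggr) = \frac{\Pr(A \cap \bigcap_{B \in S_1}\overline{B} \mid \bigcap_{B \in S_2}\overline{B})}{\Pr(\bigcap_{B \in S_1}\overline{B} \mid \bigcap_{B \in S_2}\overline{B})}.\]
The numerator is bounded above by $\Pr(A \mid \bigcap_{B \in S_2}\overline{B}) = \Pr(A)$ (since $A$ is independent of the events in $S_2$), which in turn is at most $x_A \prod_{B \in N(A)}(1-x_B)$. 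The denominator is bounded below by $\prod_{B \in S_1}(1-x_B)$ by applying the inductive hypothesis iteratively to a chain expansion of the conjunction (writing $S_1 = \{B_1, \dots, B_k\}$, the product telescopes, and each factor $1 - \Pr(B_i \mid \bigcap_{j>i}\overline{B_j} \cap \bigcap_{B \in S_2}\overline{B})$ is at least $1 - x_{B_i}$). Since $S_1 \subseteq N(A)$, the extra factors only strengthen the bound, yielding $\Pr(A \mid \bigcap_{B \in S}\overline{B}) \le x_A$. A telescoping argument over an enumeration of $\mathcal A$ then produces the desired lower bound $\prod_A (1 - x_A)$ on $\Pr(\bigcap \overline{A})$.

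To derive the symmetric statement, I set $x_A = 1/D$ for every $A \in \mathcal A$ (assuming $D \ge 2$; the case $D = 1$ is trivial since then the events are mutually independent). Since each $A$ depends on fewer than $D$ other events, the hypothesis of the asymmetric LLL is implied by
\[p \le \frac{1}{D}\left(1 - \frac{1}{D}\right)^{D-1}.\]
The main (tiny) calculation is the elementary inequality $(1 - 1/D)^{D-1} \ge 1/e$, which holds for all integers $D \ge 1$ (the sequence decreases monotonically to $1/e$). Consequently $(1/D)(1-1/D)^{D-1} \ge 1/(eD)$, and the hypothesis $Dp \le 1/e$ suffices.

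The main obstacle is the inductive step for the denominator: one must be careful to condition the events in $S_1$ one at a time on the remaining $\overline{B}$'s together with the $S_2$-conditioning so that each application of the inductive hypothesis is legitimate (the conditioning set in each invocation is strictly smaller than $S$). Once this bookkeeping is set up correctly, the rest of the argument is mechanical, and the passage to the symmetric form is a one-line optimization.
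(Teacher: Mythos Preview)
The paper does not prove this lemma; it is stated as a known result and attributed to Shearer's paper \cite{shearer1985problem}, so there is no ``paper's own proof'' to compare against. Your argument is the standard derivation: you first establish the asymmetric Lov\'asz Local Lemma by the usual induction on the size of the conditioning set (splitting into neighbors $S_1$ and non-neighbors $S_2$, bounding the numerator by independence from $S_2$ and the denominator by the inductive hypothesis applied along a chain), and then specialize to the symmetric case with the uniform choice $x_A = 1/D$ together with the elementary inequality $(1-1/D)^{D-1}\ge 1/e$. This is correct, and the only subtlety you flag---tracking that each inductive invocation conditions on a strictly smaller set---is handled properly. One minor remark: the bound you use is really the Erd\H{o}s--Lov\'asz form of the symmetric LLL; Shearer's paper sharpens the constant, but the form stated here follows already from the classical version you reproduce.
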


Now, we are ready to prove Theorem \ref{thm:reg_intro}.
Writing $\alpha = \sqrt 2 - 1$, we can restate Theorem \ref{thm:reg_intro} as follows. 

\begin{theorem}
\label{thm:regular}
Let $G$ be a graph of minimum degree $\delta$ and maximum degree $\Delta = e^{o(\delta)}$, and 
let $F:V(G) \rightarrow 2^{\mathbb N}$.
If \[|F(v)| \leq \left (\alpha - o(1) \right ) \deg_G(v)\]
for each $v \in V(G)$, then $G$ has an $F$-avoiding orientation.
\end{theorem}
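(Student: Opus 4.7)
The plan is to apply Theorem~\ref{thm:h}, whose hypothesis asks for an ordering of $V(G)$ together with a spanning subgraph $H$ such that
\[
\deg^L_G(v) - 2\deg^L_H(v) + \deg^R_H(v) \;\ge\; (\alpha - o(1))\deg_G(v)
\]
at every vertex, where $\alpha = \sqrt{2} - 1$. To produce such an ordering and $H$, I would use a randomized construction: assign each vertex $v$ an independent uniform label $t_v \in [0,1]$ that induces the ordering, and then independently include each edge $uv$ (with $t_u < t_v$) in $H$ with probability $p(t_u, t_v)$ for some fixed function $p : \{(s,t) : 0 \le s < t \le 1\} \to [0,1]$. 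Conditional on $t_v = t$, the neighbors of $v$ have i.i.d.\ uniform positions and the relevant $H$-choices are independent, so the expected gain at $v$ divided by $\deg_G(v)$ is the deterministic function
\[
\phi(t) \;=\; t \;-\; 2\!\int_0^t p(s,t)\,ds \;+\; \int_t^1 p(t,s)\,ds.
\]

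The principal analytical task -- and the main obstacle -- is to choose the rule $p$ so that $\min_{t \in [0,1]} \phi(t) \ge \alpha + \eta$ for some constant $\eta > 0$. A uniform choice $p \equiv 1/3$ gives only $\phi \equiv 1/3$, while a single-threshold cut of the form ``$p(s,t) = c$ for $s \le \tau \le t$'' optimized at $\tau = \sqrt{2}-1$ yields only the lesser constant $3 - 2\sqrt{2}$. To attain the full value $\sqrt{2} - 1$, I would identify the KKT-stationary rule for the functional optimization over $p$, whose binding conditions should collapse to the characteristic relation $(1+\alpha)^2 = 2$. Alternatively, one may write the LP relaxation of the selection of edges into $H$: its coefficient matrix has entries in $\{-2,0,1\}$, so Lemma~\ref{lem:Integer_Rounding} permits rounding the fractional optimum to a $0/1$-valued $H$ with only a constant per-vertex loss, which is then absorbed by the concentration step below.

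Having fixed a rule $p$ achieving the expected-gain bound, I would set $\epsilon = C\sqrt{\log \Delta / \delta}$, which is $o(1)$ by the hypothesis $\Delta = e^{o(\delta)}$, and define the bad events
\[
A_v \;:=\; \bigl\{\deg^L_G(v) - 2\deg^L_H(v) + \deg^R_H(v) \;<\; (\alpha - \epsilon)\deg_G(v)\bigr\}.
\]
Conditional on $t_v$, the quantity inside the braces is a sum of $\deg_G(v)$ bounded independent random variables indexed by the neighbors of $v$, so Lemma~\ref{lem:simple_chernoff} gives $\Pr[A_v] \le 2\exp(-c\epsilon^2 \delta)$ for a constant $c > 0$. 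Each $A_v$ depends only on the labels $t_u$ and $H$-memberships for vertices and edges within graph-distance $2$ of $v$, so it has dependency degree at most $O(\Delta^2)$ in the family $\{A_u\}_{u \in V(G)}$. Hence the condition $e \cdot O(\Delta^2) \cdot 2\exp(-c\epsilon^2 \delta) \le 1$ of Lemma~\ref{lem:LLL_symmetric} holds for $\delta$ large enough, and with positive probability no $A_v$ occurs. The resulting deterministic ordering and subgraph $H$ satisfy the hypothesis of Theorem~\ref{thm:h}, which delivers the required $F$-avoiding orientation.
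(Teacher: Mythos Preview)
Your framework matches the paper's proof exactly: random uniform labels induce the ordering, edges are included in $H$ independently with a probability depending on the two labels, Chernoff bounds give concentration of $\deg_G^L(v)-2\deg_H^L(v)+\deg_H^R(v)$ around $\phi(t_v)\deg_G(v)$, and the Lov\'asz Local Lemma (with dependency degree $O(\Delta^2)$) derandomizes. The reduction to the functional problem of choosing $p$ so as to maximize $\min_{t\in[0,1]}\phi(t)$ is also correct, as is your computation that the simple threshold rule only achieves $3-2\sqrt{2}$.

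The gap is that you never exhibit a rule $p$ with $\min_t\phi(t)\ge\alpha$; you only assert that a KKT analysis ``should'' yield one. This is the non-obvious step. The paper's explicit choice is
\[
p(x,y)=\begin{cases}\dfrac{\alpha}{1-\alpha} & \text{if }0\le x\le\alpha\text{ and }\alpha+\dfrac{1-\alpha}{\alpha}\,x\le y\le 1,\\[1ex] 0 & \text{otherwise,}\end{cases}
\]
i.e.\ a constant on a triangular region rather than a rectangular one. A direct computation (using $(\alpha/(1-\alpha))^2=\tfrac12$, which is exactly your relation $(1+\alpha)^2=2$) then gives $\phi(t)\equiv\alpha$ for all $t$. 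Two small corrections: you do not need $\min_t\phi(t)\ge\alpha+\eta$ for a fixed $\eta>0$; equality $\phi\equiv\alpha$ suffices, since the $o(1)$ in the theorem statement absorbs the Chernoff deviation. (Indeed the paper shows afterward that $\alpha$ is the best constant obtainable via Theorem~\ref{thm:h}, so no such $\eta$ exists.) And your alternative proposal to use Lemma~\ref{lem:Integer_Rounding} is a detour here: that lemma is used in the paper for the deterministic $\tfrac13$ bound with a fixed ordering, whereas the present argument already produces an integer-valued $H$ directly from the random inclusion, so no rounding step is needed.
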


In the statement of the theorem, our function $o(1)$ approaches $0$ as $\delta \rightarrow \infty$, and the rate at which $o(1)$ approaches $0$ depends on how quickly $\delta$ dominates $\log \Delta$.

\begin{proof}
Let $\gamma > 0$ be a fixed value. We randomly choose an ordering of $V(G)$ by using a function $\phi$ to map each vertex $v \in V(G)$ uniformly at random to the real interval $[0,1]$ and then letting $V(G)$ be ordered according to the left-right order of the image of $V(G)$ in $[0,1]$. Since two vertices are mapped to the same value with probability $0$, we can assume that all values $\phi(v)$ are distinct.

Consider a vertex $v \in V(G)$, and write $d = \deg_G(v)$. We observe that by Lemma \ref{lem:simple_chernoff}, it holds with probability at least $1-2\exp\left (-\frac{1}{12} \gamma^2 d \right )$ that 
\begin{equation}
\label{eqn:g-}
(\phi(v) - \gamma) d < \deg^L_G(v) <( \phi(v)  + \gamma )d.
\end{equation}


Now, we probabilistically create a subgraph $H \subseteq G$. If $uv \in E(G)$ and $\phi(u) = x$ and $\phi(v) = y$, then we add the edge $uv$ to $H$ with probability 
\[ p(x,y) = 
\begin{cases}
\frac{\alpha}{1-\alpha} & \textrm{if } 0 \leq x \leq \alpha \textrm{ and } \alpha + \left ( \frac{1-\alpha}{\alpha}  \right )x \leq y \leq 1, \\
0 & \textrm{otherwise.}
\end{cases}
\]
We make some observations about $H$. 
First, by construction, a vertex $v \in V(G)$ for which $\phi(v) \leq \alpha$ satisfies $\deg_H^L(v) = 0$, and a vertex $v \in V(G)$ for which $\phi(v) \geq \alpha$ satisfies $\deg_H^R(v) = 0$.
Next, consider a vertex $v \in V(G)$, and write $d = \deg_G(v)$. If $\phi(v) \leq \alpha$, then we claim that by our simplified Chernoff bound (Lemma \ref{lem:simple_chernoff}), it holds with probability at least $1 - 2\exp \left ( - \frac{1}{12}\gamma^2 d \right )$ that 
\begin{equation}
\label{eqn:h+}
 (\alpha - \phi(v) - \gamma) d \leq \deg^R_H(v) \leq (\alpha - \phi(v)+ \gamma )d. 
\end{equation}
Indeed, each neighbor
$w \in N(v)$ becomes a right-neighbor of $v$ in $H$ independently with probability 
\[\int_0^1 p(\phi(v),y) dy = \frac{\alpha}{1-\alpha} \cdot (1 - \alpha - \frac{1- \alpha}{\alpha}\phi(v)) = \alpha - \phi(v),\]
and hence Lemma \ref{lem:simple_chernoff} applies.

On the other hand, if $\phi(v) \geq \alpha$, we claim that Lemma \ref{lem:simple_chernoff} implies that, with probability at least $1 - 2\exp \left ( - \frac{1}{12}\gamma^2 d \right)$, we have 
\begin{equation}
\label{eqn:h-}
\frac{1}{2} (\phi(v)-\alpha - \gamma) d \leq \deg^L_H(v) \leq \frac{1}{2} (\phi(v)-\alpha +  \gamma )d. 
\end{equation}
Indeed, each neighbor $w$ of $v$ becomes a left-neighbor of $v$ in $H$ with probability 
\[\int_{0}^{1} p(x,\phi(v)) dx =  \left ( \frac{\alpha}{1-\alpha} \right )^2 (\phi(v) - \alpha) = \frac{1}{2}(\phi(v) - \alpha),\]
so Lemma \ref{lem:simple_chernoff} applies.

Now, depending on the value of $\phi(v)$, we see from (\ref{eqn:g-}) and (\ref{eqn:h+}) or from (\ref{eqn:g-}) and (\ref{eqn:h-}) that the distribution on $\deg^L_G(v) - 2\deg^L_H(v) + \deg^R_H(v)$ is centered around $\alpha d$. In particular, with probability at least $1 - 4\exp(-\frac{1}{12} \gamma^2 d)$,
\begin{equation}
\label{eqn:b}
(\alpha   - 2\gamma )d < \deg^L_G(v) - 2\deg^L_H(v) + \deg^R_H(v) < (\alpha +  2\gamma) d . 
\end{equation}


Now we apply the Lov\'asz Local Lemma (Lemma \ref{lem:LLL_symmetric}). For each vertex $v$, we define a bad event $B_v$ to be the event that (\ref{eqn:b}) does not hold for $v$. We have seen that the probability of each bad event $B_v$ is at most $4\exp(- \frac{1}{12} \gamma^2 \delta) $. Furthermore, the vertices and edges involved in a given bad event $B_v$ induce a star in $G$ centered at $v$, and two bad events are independent if and only if their corresponding stars are disjoint. Hence, $B_v$ is dependent with at most $\Delta^2$ other bad events. Hence, by Lemma \ref{lem:LLL_symmetric}, as long as 
\[ (\Delta^2+1) \cdot 4 \exp \left(-\frac{1}{12} \gamma^2 \delta \right)  < 1/e, \]
the equation (\ref{eqn:b}) holds for each vertex $v$ with positive probability. Since $\Delta = \exp(o(\delta))$, the inequality holds when $\delta$ is sufficiently large, and we hence conclude that with positive probability, our random process chooses a subgraph $H \subseteq G$ that satisfies (\ref{eqn:b}) at each vertex $v \in V(G)$. Thus, a
subgraph $H$ 
satisfying (\ref{eqn:b})
exists. Therefore, by Theorem \ref{thm:h}, if each vertex $v \in V(G)$ has a list $F(v)$ of at most $(\alpha - 2 \gamma) \deg_G(v)$ forbidden values, then $G$ has an $F$-avoiding orientation.
Letting $\gamma$ tend to $0$ completes the proof.
\end{proof}

Finally, we show that if we use Theorem \ref{thm:h} to prove for some constant $\beta$ that every regular graph $G$ has an $F$-avoiding orientation when $|F(v)| \leq \beta \deg(v)$, then $\beta \leq \alpha+o(1)$. In other words, the coefficient $\alpha-o(1)$ in Theorem \ref{thm:regular}
is essentially best possible. To show this, consider the complete graph $G = K_n$, and order its vertices as $v_1, \dots, v_n$.
Suppose that we can use Theorem \ref{thm:h} to show that $G$ has an $F$-avoiding orientation whenever $|F(v)| \leq \beta(n-1)$ at each vertex $v \in V(G)$.
We partition $V(K_n)$ into two sets $A$ and $B$, where $A$ contains all vertices $v_i$ for which $i \leq \lfloor \beta n\rfloor$, and $B$ contains all other vertices.
Given an oriented subgraph $H$ of $G$, we say that the \emph{weight} $w(v)$ of a vertex $v \in V(G)$ is given by 
\[w(v) = \deg^L_G(v) - 2 \deg^L_H(v) + \deg_H^R(v).\]
In order to apply Theorem \ref{thm:h}, the weight of each vertex $v \in V(G)$ must satisfy $w(v) \geq \beta(n-1)$.

Now, let $e(A)$ and $e(B)$ denote the number of edges in $H[A]$ and $H[B]$, respectively, and let $e(A,B)$ denote the number of edges with one endpoint in $A$ and one endpoint in $B$. We observe that if Theorem \ref{thm:h} applies, then the total weight $w(A)$ of all vertices in $A$ satisfies
\begin{equation}
\lfloor \beta n \rfloor \beta (n-1) \leq  w(A) = \sum_{i = 1}^{\lfloor \beta n \rfloor } (i-1) - e(A) + e(A,B) \leq \frac{1}{2} \lfloor \beta n -1 \rfloor  \lfloor \beta n \rfloor + e(A,B), \label{eq:a}
\end{equation}
and the total weight $w(B)$ of all vertices in $B$ satisfies
\begin{equation}
 (n -  \lfloor \beta n \rfloor  ) \beta (n-1) \leq w(B) = \sum_{i = \lfloor \beta n \rfloor + 1}^{ n } (i-1)  -2e(A,B) - e(B) \leq \frac{1}{2} \left ( n^2 - n - \lfloor \beta n \rfloor^2 - \lfloor \beta n \rfloor \right ) -2e(A,B).
 \label{eq:b}
 \end{equation}
We eliminate the quantity $e(A,B)$ by doubling (\ref{eq:a}) and adding (\ref{eq:b}), and then simplifying yields 
\[(1+o(1))n^2 \left ( \frac{1}{2}\beta^2 + \beta - \frac{1}{2} \right ) \leq 0.\]
This implies that $\beta \leq \alpha + o(1)$. Therefore, the coefficient of $\alpha - o(1)$ appearing in Theorem \ref{thm:regular} is close to best possible when using Theorem \ref{thm:h}.

\section{Conclusion}\label{sec:conc}




It is natural to ask whether Conjecture \ref{conj:1} could be proved for every graph $G$ using Theorem \ref{thm:2} by finding a suitable orientation $D$ of $G$, which would give a stronger result. In other words, the following question emerges.

\begin{question}
\label{q:stronger}
Does every graph $G$ have a spanning subgraph with an Alon-Tarsi orientation 
in which each vertex $v$ has out-degree at least 
$\frac{1}{2}(\deg_G(v) - 1)$?
\end{question}

If Question \ref{q:stronger} has a negative answer, we can still ask the similar question where $-1$ is replaced with an arbitrary constant $-C$.

If the answer to Question \ref{q:stronger} is affirmative, then it would imply that every graph $G$ on $n$ vertices and of maximum degree $\Delta$ has a subgraph $H$ with at least $|E(G)| -  n/2  $ edges for which $\AT(H) \leq \lfloor \frac{1}{2} \Delta + \frac{3}{2} \rfloor$. Indeed, given an affirmative answer to Question \ref{q:stronger} and a graph $G$, we could first find a subgraph $H$ of $G$ with an Alon-Tarsi orientation $D$ as described in the question. This subgraph necessarily has at least $\sum_{v \in V(G)} \frac{1}{2}(\deg_G(v) - 1) = |E(G)| - n/2$ edges. Then, after reversing the direction given by $D$ for each edge of $H$, we obtain an Alon-Tarsi orientation of $H$ in which each vertex has out-degree at most $\frac{1}{2}\deg_G(v) + \frac{1}{2}$, implying that $\AT(H) \leq \lfloor \frac{\Delta + 3}{2} \rfloor$. This leads us to the following problem.

\begin{question}
\label{q:matching}
Does there exist a constant $C$ such that every graph $G$ of maximum degree $\Delta$ has a matching $M$ such that $\AT(G - M) \leq  \frac{1}{2}\Delta + C$?
\end{question}

If the answer to Question \ref{q:matching} is affirmative, it is natural to ask for the optimal value of $C$. We note that $C$ cannot be smaller than $\frac{1}{2}$, since for any odd $n$ and any maximum matching $M$ in $K_n$, $\AT(K_n - M) = \frac{n+1}{2}$ (see \cite{Huang}).

Our requirement that $M$ be a matching rather than an arbitrary set of $n/2$ edges is inspired by the following two results, which both show that if a matching is removed from a graph $G$, then the upper bound on some coloring parameter of $G$ can be reduced. For planar graphs, Zhu \cite{ZhuPlanar} has shown that the Alon-Tarsi number is at most $5$ and that this upper bound is tight, but Grytczuk and Zhu \cite{GZ} have also shown that every planar graph $G$ contains a matching $M$ for which $\AT(G-M) \leq 4$. Furthermore, it is well-known that a graph $G$ of maximum degree $\Delta$ satisfies $\chi(G) \leq \Delta  + 1$ and that this bound is tight for cliques.
However, every graph
$G$ of maximum degree $\Delta$ contains a matching $M$ for which $\chi(G-M) \leq \lceil \frac{\Delta+1}{2} \rceil$.
Indeed, Lov\'asz \cite{LovaszDefective}
proved a result on graph coloring which implies, as shown by Jesurum \cite{Jesurum}, that 
every graph of maximum degree $\Delta$ has a
$\lfloor \Delta /k \rfloor$-defective
$k$-coloring, in which each vertex is adjacent to at most
$\lfloor \Delta /k \rfloor$ vertices of the same color.
By letting $k = \lceil \frac{\Delta+1}{2} \rceil$, 
we see that every graph has a $1$-defective $\lceil \frac{\Delta+1}{2} \rceil$-coloring, in which the monochromatically colored edges form a matching. By removing this matching $M$ of monochromatic edges from $G$, we obtain a graph 
with chromatic number at most $\lceil \frac{\Delta+1}{2} \rceil$.

\bibliographystyle{abbrv}
\bibliography{ref}

\end{document}